\documentclass[11 pt,a4paper]{amsart}
\usepackage[utf8]{inputenc}
\usepackage{graphicx,color}
\usepackage{latexsym}
\usepackage{graphicx} 
\usepackage{amsthm,amsmath, amssymb,amsfonts}
\usepackage{layout,textcomp}
%
\usepackage[english]{babel}
\usepackage{cite}

\newcommand\raisepunct[1]{\,\mathpunct{\raisebox{0.5ex}{#1}}}

\theoremstyle{plain}

\newtheorem{theorem}{Theorem}[section]

\newtheorem{lemma}[theorem]{Lemma}

\newtheorem{corollary}[theorem]{Corollary}

\newtheorem{proposition}[theorem]{Proposition}

\theoremstyle{remark}
\newtheorem{remark}[theorem]{Remark}

\numberwithin{equation}{section}
\newcommand{\dive}{{\rm div\,}}

\title{Half-space theorems for $1$-surfaces of $\mathbb{H}^3$}
\author{G. Pacelli Bessa}
\address{Universidade Federal do Ceará\\
Departamento de Matemática\\ 60455-760,\linebreak
Fortaleza - CE, Brazil}
\email{bessa@mat.ufc.br.}
\author{Tiarlos Cruz}
\address{Universidade Federal de Alagoas\\
Instituto de Matemática\\ 57072-970,\linebreak
Maceió - AL, Brazil}
\email{cicero.cruz@im.ufal.br}
\author{Leandro F. Pessoa}
\address{Current: Universit\"at Bielefeld\\ Faculty of Mathematics 33615, Bielefeld, Germany.\linebreak
Permanent: Universidade Federal do Piauí\\
Departamento de Matemática\\ 64049-550, Teresina - PI, Brazil.}
\email{leandropessoa@ufpi.edu.br}

\begin{document}

\begin{abstract}
In this paper we investigate the intersection problem for $1$-surfaces immersed in a complete Riemannian three-manifold $P$ with Ricci curvature bounded from below by $-2$. We first prove a Frankel's type theorem for $1$-surfaces with bounded curvature immersed in $P$ when $\text{\rm Ric}_{P} > -2$. In this setting we also give a criterion for deciding whether a complete $1$-surface is proper. A splitting result is established when the distance between the $1$-surfaces is realized, even if $\text{\rm Ric}_{P} \geq -2$. In the hyperbolic space $\mathbb{H}^3$ we show strong half-space theorems for the classes of complete $1$-surfaces with bounded curvature, parabolic $1$-surfaces, and stochastically complete $H$-surfaces with $H<1$. As a by-product of our techniques a Maximum Principle at Infinity is given for $1$-surfaces in $\mathbb{H}^3_{\raisepunct{.}}$ 
\end{abstract}

\maketitle

\section{Introduction}
An intriguing question in Differential Geometry is whether two minimal surfaces in a Riemannian three-manifold intersect. The roots of this problem can be traced back to  the fifth Euclid's postulate  and its negation leading to discovery of  non-Euclidean geometries. In \cite{hadamard} J. Hadamard showed that on a complete surface with positive curvature
 every geodesic must  intersect every closed geodesic.  T. Frankel in \cite{frankel}  extended the intersection result of Hadamard showing that  minimal hypersurfaces immersed as closed subsets  of a  Riemannian manifold with positive Ricci curvature intersect provided  one of then is compact, see also \cite{petersen_whilhelm}. 
  A class of  minimal hypersurfaces  of a Riemannian manifold is said to have the intersection property if any two elements of   the class intersect unless they are totally geodesic parallel leaves in local product structure.

It was proved  by G.P. Bessa, L.P. Jorge and G. Oliveira in \cite{bessa-jorge-oliveira} that  the class of complete minimal surfaces with bounded curvature immersed  in   three-manifolds $N$ with positive Ricci curvature and bounded geometry  has the intersection property while H. Rosenberg  \cite{rosenberg-bounded} proved it for the class of complete minimal surfaces immersed with bounded curvature in compact three-manifolds $N$ with  positive Ricci curvature.

 In the Euclidean space $\mathbb{R}^{3}_{\raisepunct{,}}$ as a consequence of the  convex hull theorem due to F. Xavier \cite{xavier},   any complete minimal surface  with bounded curvature immersed in an open half-space is a plane. Likewise, in \cite{hoffman-meeks}  D. Hoffman and W. Meeks  proved that any complete  minimal surface  properly  immersed in an open half-space is  also a plane. Furthermore, using a separating plane theorem \cite{meeks-simon-yau}, they proved the intersection property for complete properly immersed minimal surfaces of $\mathbb{R}^3$. In the same vein, Bessa, Jorge, Oliveira \cite{bessa-jorge-oliveira} and Rosenberg \cite{rosenberg-bounded} established a separating plane theorem for  minimal surfaces immersed with bounded curvature, and thus by Xavier's result it yields the intersection property for the class of complete minimal surfaces of $\mathbb{R}^3$ with bounded curvature. These intersection results are known  in the literature as {\em half-space theorems}. 
 
 Half-space theorems in $\mathbb{R}^{3}$ have been   established between the classes of  complete properly  minimal surfaces
and complete minimal surfaces with bounded curvature in \cite[Cor.1.4]{bessa-jorge-oliveira}, as well as between the classes of  parabolic minimal surfaces and complete minimal surfaces with bounded curvature in \cite[Thm.1]{bessa-jorge-pessoa}. 
There are also intersection results for minimal surfaces immersed  in homogeneous three-spaces in \cite{daniel2009half,daniel2011half} and   for surfaces with constant mean curvature $H>0$ immersed in various ambient spaces,  see \cite{M,ros2010properly,RR,rosenberg2013half} and references therein. 

The purpose of this paper is to extend some of this circle of ideas about intersection properties  to  $1$-surfaces immersed in a complete Riemannian three-manifold $P$ with Ricci curvature bounded from below ${\rm Ric}_{P}\geq -2$, and  in particular, $1$-surfaces immersed in the hyperbolic space $\mathbb{H}^3_{\raisepunct{.}}$  

One of our motivation is the pioneering work of R. Bryant \cite{Br}, in which it is shown that the geometry of  minimal surfaces immersed in $\mathbb{R}^3$ shares many similarities with the geometry of $1$-surfaces immersed in  $\mathbb{H}^3_{\raisepunct{.}}$
 This connection has been exploited in several works to provide important contributions to the theory of $1$-surfaces of $\mathbb{H}^3$, see for instance \cite{UY1,CHR,KKMS}.

Let $N$ be a complete $H$-surface properly immersed in a complete oriented three-manifold $P$.  Let $\nu$ be the unit normal vector field along $N$ such that $\overrightarrow{H}_{\!\!_N} = H\nu$,  $H> 0$. A  connected component $\Omega$ of $P\backslash N$ is said to be mean convex if the mean curvature vector field of $N$ points towards $\Omega$. The vector  $\overrightarrow{H}_{\!\!_N}$ at $p_0 \in N \cap \partial \Omega$ points towards $\Omega$ if, for any sequence $q_n \in \Omega$ with $q_n \rightarrow p \in V \subset N$, $V$ a neighbourhood  of $p_0$,  we have $q_n = \exp_p(t_n\nu(p))$ for some $0<t_n<\varepsilon$.

In the context of surfaces with positive constant mean curvature, the intersection property means that an immersed
 $H$-surface can not lie in any mean convex component determined by another disjoint $H$-surface. In our  first result
 we establish  the intersection property for complete $1$-surfaces with
  bounded  curvature immersed in three-manifolds with Ricci curvature ${\rm Ric}>-2$. This result correspond to  the intersection property proved in  \cite{bessa-jorge-oliveira, frankel, rosenberg-bounded} for minimal surfaces with bounded curvature in three-manifolds with ${\rm Ric}>0$.

\begin{theorem}\label{intersect}Let $P$ be a complete Riemmanian three-manifold with Ricci curvature bounded below by ${\rm Ric}_{_P}>-2$, and let $M$ and $N$ be two complete immersed $1$-surfaces  of
 $P$. If $M$ has bounded curvature and $N$ is compact, then $M$ can not lie in a mean convex  component of $P\backslash N$.
\end{theorem}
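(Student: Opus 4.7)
The plan is to adapt Frankel's second-variation argument to the $1$-surface setting, replacing the constant test field $V(t)=e(t)$ used in the minimal case by an exponential one $V(t)=e^{t}e(t)$; the exponential growth precisely absorbs the Ricci lower bound $-2$ and leaves behind only the boundary contributions from the two unit-length mean-curvature vectors, from which the contradiction is forced.

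Suppose, for contradiction, that $M\subset\Omega$, where $\Omega$ is a mean-convex component of $P\setminus N$. Then $M\cap N=\emptyset$ and $\vec H_{N}$ points into $\Omega$. The first task is to realize the distance $\ell:=\mathrm{dist}(M,N)$ by a geodesic. Since $N$ is compact, the continuous function $q\mapsto\mathrm{dist}(q,M)$ attains its minimum at some $q\in N$, and any minimizing sequence $p_{n}\in M$ with $d(p_{n},q)\to\ell$ stays in a bounded ball of $P$. The bounded curvature of $M$ provides uniform local graphical charts about each $p_{n}$, and a standard $C^{2}$-compactness argument produces a complete $1$-surface $M_{\infty}\subset\overline{\Omega}$ through a point $p$ with $d(p,q)=\ell$. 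The case $\ell=0$ is ruled out by the Hopf strong maximum principle for CMC surfaces: a tangency of $M_{\infty}$ with $N$ at $q$ from the mean-convex side would force local coincidence, contradicting that the approximating pieces of $M$ lie strictly in $\Omega$. Hence $\ell>0$, and there is a unit-speed minimizing geodesic $\gamma\colon[0,\ell]\to\overline{\Omega}$ from $p\in M_{\infty}$ to $q\in N$, orthogonal to both.

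Let $\{e_{1}(t),e_{2}(t)\}$ be the parallel orthonormal frame along $\gamma$ extending an orthonormal basis of $T_{p}M_{\infty}$; the endpoint perpendicularity forces $\{e_{i}(\ell)\}$ to be a basis of $T_{q}N$. Writing $T=\dot\gamma$, the orientation of $\gamma$ gives $\vec H_{N}(q)=-T(\ell)$ and $\vec H_{M_{\infty}}(p)=\varepsilon\,T(0)$ for some $\varepsilon\in\{-1,+1\}$. Plug the test fields $V_{i}(t)=e^{t}e_{i}(t)$ into the second variation of arc length for variations with endpoints on $M_{\infty}$ and $N$, sum over $i$, and invoke $\mathrm{Ric}_{P}>-2$ strictly along $\gamma$:
\begin{align*}
\sum_{i=1}^{2}L''(V_{i})
&=\int_{0}^{\ell}\bigl[2e^{2t}-e^{2t}\,\mathrm{Ric}_{P}(T,T)\bigr]\,dt\;-\;2e^{2\ell}\;-\;2\varepsilon\\
&<\int_{0}^{\ell}4e^{2t}\,dt-2e^{2\ell}-2\varepsilon\;=\;-2-2\varepsilon\;\le\;0.
\end{align*}
But the minimality of $\gamma$ among such variations forces $\sum_{i}L''(V_{i})\ge 0$, the desired contradiction. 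The main obstacle is the first paragraph, namely producing the limit $1$-surface $M_{\infty}$ realizing the distance; this is precisely where the bounded-curvature hypothesis on $M$ is essential, while the variational computation itself is then a short template once the exponential test field is chosen.
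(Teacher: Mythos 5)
Your variational route is genuinely different from the paper's. The paper reduces the theorem to strong stability of limit leaves (Theorem \ref{Theobessa}, Proposition \ref{MPR}) and, when ${\rm dist}(\overline M,N)>0$, to the splitting Theorem \ref{splitting} together with a Riccati comparison. You instead adapt Frankel's argument directly, replacing the constant test field $e_i(t)$ of the minimal case by $e^te_i(t)$, so that $\int_0^\ell|V_i'|^2$ exactly absorbs the lower bound ${\rm Ric}_{_P}\geq -2$ together with the boundary contribution of $N$. The computation is correct: with the index form
\[
I(V,V)=\int_0^\ell\bigl(|V'|^2-\langle R(V,T)T,V\rangle\bigr)\,dt+\langle II_{N}(V,V),T\rangle\big|_{\ell}-\langle II_{M_\infty}(V,V),T\rangle\big|_{0}\,,
\]
summing over $i$ and using $\overrightarrow{H}_{\!\!_N}(q)=-T(\ell)$ reproduces your boundary term $-2e^{2\ell}-2\varepsilon$ and the strict inequality $\sum_i L''(V_i)<-2-2\varepsilon\leq 0$, contradicting the minimality of $\gamma$. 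For $\ell>0$ this is cleaner and more self-contained than the paper's Case~2.

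There is, however, a genuine gap in your treatment of $\ell=0$. You correctly observe that the limit leaf $M_\infty$ must be tangent to $N$ at $q$ (a transversal crossing would force $M$ itself to cross $N$), and that the Hopf maximum principle then makes $M_\infty$ coincide with $N$ locally, hence $N\subset\mathcal{L}_{\varphi}$. But this does \emph{not} contradict ``the approximating pieces of $M$ lie strictly in $\Omega$'': a surface can accumulate on $\partial\Omega$ from inside without ever touching it, which is exactly what happens in the ${\rm Ric}\equiv-2$ example in the Remark after Theorem \ref{intersect}, where the slices of $\mathbb R\times\mathbb T^2$ accumulate from one side. The thing that forbids this when ${\rm Ric}_{_P}>-2$ is a stability argument that you have not supplied: since $N$ is a compact limit leaf whose intersection with $\varphi(M)$ is at most a set of kissing points, it is strongly stable by Theorem \ref{Theobessa}(b), so there is a positive $u$ with $Lu=0$; integrating over the compact $N$ gives $0=\int_N(|II|^2+{\rm Ric}_{P}(\nu,\nu))\,u\geq\int_N(2+{\rm Ric}_{P}(\nu,\nu))\,u>0$, a contradiction. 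That stability step (essentially Corollary \ref{corbessa} plus Proposition \ref{MPR}) is the missing ingredient needed to close the $\ell=0$ case; the variational computation alone does not reach it.
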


\begin{remark}
The Ricci curvature assumption in Theorem \ref{intersect} is essential. Indeed, consider the  manifold $P=\mathbb{R} \times \mathbb{T}^{2}$ endowed with the metric $d t^{2}+e^{2 t} g$, where $g$ is the standard flat metric of the torus $\mathbb{T}^{2}_{\raisepunct{.}} $ The manifold $P$  has constant sectional  curvature $K=-1$ thus its Ricci curvature is $-2$ and the slices  $\mathcal{N}_t = \{t\}\times\mathbb{T}^2$ are  compact  $1$-surfaces  embedded in $P$.  Therefore, the slice $M=\mathcal{N}_{t}$ lies in the mean convex component of  $P\setminus \mathcal{N}_{s}$ if $t<s$.
\end{remark}

The proof of Theorem \ref{intersect} relies on the following  stability argument for $1$-surfaces, which is a version of \cite[Cor.3]{S} and can be established using  ideas contained in the proof of \cite[Thm.2.13]{MPR}.

\begin{proposition}\label{MPR-a}
There are no complete strongly stable $H$-surfaces with $H\geq 1$ in a three-manifold with $ Ric>-2$.
\end{proposition}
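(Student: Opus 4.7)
The plan is to translate strong stability into a Schr\"odinger-type operator inequality, reduce the zeroth-order term via the Gauss equation, and then split into the compact and non-compact cases. Strong stability of $M$ means that for every $\varphi \in C^{\infty}_{c}(M)$
\begin{equation*}
\int_{M} \bigl(|\nabla\varphi|^{2} - q\,\varphi^{2}\bigr) \geq 0, \qquad q := |A|^{2} + \mathrm{Ric}_{P}(\nu,\nu),
\end{equation*}
which, by the usual Agmon--Allegretto--Piepenbrink equivalence, is the same as the existence of a smooth positive function $u>0$ on $M$ satisfying $\Delta u + qu \leq 0$.

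The Gauss equation together with the orthonormal-frame identity $2K_{P}(T_{p}M) + \mathrm{Ric}_{P}(\nu,\nu) = \mathrm{Ric}_{P}(e_{1},e_{1})+\mathrm{Ric}_{P}(e_{2},e_{2})$ rewrites the potential as
\begin{equation*}
q \;=\; 4H^{2} - 2K_{M} + \mathrm{Ric}_{P}(e_{1},e_{1})+\mathrm{Ric}_{P}(e_{2},e_{2}).
\end{equation*}
The hypotheses $H \geq 1$ and $\mathrm{Ric}_{P} > -2$, together with the elementary bound $|A|^{2} \geq 2H^{2}$ (Cauchy--Schwarz applied to the principal curvatures), then yield two strict pointwise estimates that are the heart of the argument:
\begin{equation*}
q > -2K_{M} \qquad\text{and}\qquad q = |A|^{2} + \mathrm{Ric}_{P}(\nu,\nu) > 2 - 2 = 0.
\end{equation*}

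If $M$ is compact, the test function $\varphi \equiv 1$ in the stability inequality gives $\int_{M} q \leq 0$, which immediately contradicts the strict positivity $q>0$. If $M$ is non-compact, the positive function $u$ satisfies $\Delta u \leq -qu < 0$, so $u$ is a non-constant positive superharmonic function on $M$. When $M$ is parabolic this forces $u$ to be constant, whereupon $\Delta u + qu = qu > 0$ contradicts $\Delta u + qu \leq 0$. For non-parabolic $M$ I would exploit the companion inequality $q + 2K_{M} > 0$ together with a logarithmic cutoff and the intrinsic Gauss--Bonnet formula applied to geodesic disks of $M$, following the schemes of \cite[Cor.3]{S} and \cite[Thm.2.13]{MPR}, to close the argument. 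The main obstacle is precisely this last step: because the strict pointwise positivity $q>0$ is not uniform, no spectral gap is a priori available, and one must delicately balance the cutoff against the intrinsic Gauss--Bonnet formula on $M$ to harness the strict excess $q+2K_{M}>0$.
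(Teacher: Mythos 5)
Your proposal follows essentially the same route as the paper: the compact case is dispatched by inserting the constant test function in the stability inequality (strict positivity of $q=|A|^{2}+\mathrm{Ric}_{P}(\nu,\nu)>2H^{2}-2\geq 0$ gives the contradiction), and for the non-compact case you point to the logarithmic cutoff together with the Gauss--equation rewriting of $q$ and intrinsic Gauss--Bonnet, which is precisely what the paper extracts from \cite[Cor.3]{S} and \cite[Thm.2.13]{MPR}. Two minor comments. First, the parabolic versus non-parabolic split you introduce is an unnecessary detour: the logarithmic cutoff argument handles both cases simultaneously and the paper does not split on it. Second, the non-uniform strict positivity of $q$ that you flag as the main obstacle is not where the genuine work lies; on the left of the stability inequality one only needs the fixed positive constant $\int_{B_{1}}q>0$ once the cutoff equals one on a unit ball, while the real content is showing quadratic area growth of $M$ (via Gauss--Bonnet and the inequality $q+2K_{M}>0$), which is what makes the right-hand side decay like $C/\log R$ and is exactly what the MPR/Schoen argument delivers.
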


As an application of Proposition \ref{MPR-a} we have the following counterpart of \cite[Cor.1.6]{bessa-jorge-oliveira} and \cite{rosenberg-bounded} to $1$-surfaces of three-manifolds with Ricci curvature $Ric>-2$.  In what follows a manifold  is said to have  bounded geometry if the sectional curvature is  bounded from above and the injectivity radius is  bounded away from zero. 
 \begin{theorem}\label{teo2}
  Let  $P$ be a  three-manifold  with bounded geometry and Ricci curvature $Ric_{_P} > -2$, and   $M$ be a complete $1$-surface  with bounded curvature injectively immersed of $P$.  Then,
  \begin{itemize} 
  \item[a)] $M$ is compact if $P$ is compact.
  \item[b)] $M$ is proper if $P$ is non-compact.
  \end{itemize}
\end{theorem}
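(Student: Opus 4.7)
My plan is to argue by contradiction in both parts simultaneously, using the non-existence of strongly stable $1$-surfaces supplied by Proposition \ref{MPR-a}. Suppose that in case (a) $M$ is non-compact, or in case (b) $M$ is non-proper. In either case one produces a sequence $(q_n)\subset M$ that diverges to infinity in the intrinsic distance of $M$ but remains inside a compact subset of $P$, so that after passing to a subsequence $q_n\to q_\infty\in P$. (In case (a) non-compactness of $M$ forces infinite intrinsic diameter, and the ambient $P$ is already compact; in case (b) one exploits the failure of properness over some compact $K\subset P$.)

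The bounded second fundamental form of $M$ combined with the bounded geometry of $P$ (i.e.\ an upper sectional curvature bound for $P$ together with a positive lower bound on the injectivity radius) gives, via the standard Schauder estimates applied to the quasilinear CMC graph equation in harmonic coordinates, a uniform radius $\delta>0$ such that each neighborhood of $q_n$ in $M$ is a normal exponential graph over the disk of radius $\delta$ in $T_{q_n}M$, with uniform $C^{2,\alpha}$ bounds on the graphing functions. By Arzel\`a-Ascoli, a subsequence of these graphs converges in $C^{2,\alpha}$ to a local $1$-disk $L_0$ through $q_\infty$. Since the uniform curvature bound is inherited by the limit, and since $P$ is complete, this procedure can be iterated along every geodesic of $L_0$, extending $L_0$ to a complete immersed $1$-surface $L\subset P$.

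The immersion of $M$ being injective and the $q_n$ escaping every compact subset of $M$ ensure that the local sheets of $M$ about the $q_n$ are pairwise distinct and, after a further subsequence, accumulate onto $L$ without coinciding with it. Hence $L$ is a genuine limit leaf of the $1$-lamination $\mathcal{L}$ generated in a neighborhood of $q_\infty$. By the limit-leaf stability theorem of Meeks-P\'erez-Ros for $H$-laminations, $L$ is strongly stable. Since $\mathrm{Ric}_P>-2$, this contradicts Proposition \ref{MPR-a}, and both (a) and (b) follow.

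I expect the main obstacle to be the rigorous passage from the local $C^{2,\alpha}$ accumulation of sheets to a \emph{complete} strongly stable limit leaf, and the certification that $L$ is a limit leaf rather than simply a nearby sheet of $M$ itself. Both steps rely on the uniform curvature bound descending cleanly from $M$ to the limit (so that the iterative extension along geodesics of $L$ does not blow up), and on the injectivity of the immersion, which guarantees that the sheets converging to $L$ are distinct from $L$. A minor technical point is selecting the initial $\delta$ small enough so that the ambient exponential chart is a diffeomorphism uniformly in $n$; this is precisely what the bounded geometry hypothesis on $P$ provides.
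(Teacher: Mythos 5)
The overall strategy — construct a complete limit leaf through an accumulation point, show it is strongly stable, and contradict Proposition \ref{MPR-a} — is the same as the paper's. But the key stability step is carried out differently, and your version has a genuine gap.

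You invoke the Meeks--P\'erez--Ros theorem that limit leaves of an $H$-lamination are stable. To apply it, the relevant local structure near $q_\infty$ must actually be an $H$-lamination, and for $H=1>0$ this is not automatic even when $\varphi$ is injective with bounded curvature. The limit leaf $L$ is only an immersed $1$-surface, not necessarily embedded, so it may have self-touching points (tangential self-intersections with opposite mean curvature vectors); and $\varphi(M)$ may meet $L$ tangentially at a kissing point where the two mean curvature vectors point to opposite sides. At such a point the union $\varphi(M)\cup L$ is \emph{not} locally a product lamination — the leaves touch rather than stack — so the lamination hypothesis, and hence the Meeks--P\'erez--Ros conclusion, cannot be taken off the shelf. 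Your sentence ``Hence $L$ is a genuine limit leaf of the $1$-lamination $\mathcal{L}$ generated in a neighborhood of $q_\infty$'' asserts exactly the structure that needs to be proved, and that is where the difficulty lies.

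The paper's Theorem \ref{Theobessa}(b) is a bespoke substitute for this step: it proves strong stability of a limit leaf $S$ directly from the Jacobi operator under the \emph{weaker} hypothesis that $S\cap\varphi(M)$ is empty or consists only of kissing points, using the observation that the approaching compact pieces $C_k\subset\varphi(M)$ converge to $C\subset S$ from one side inside a one-sided tubular neighborhood, and then a Hopf-type barrier argument with a solution of $Lu=1$. The proof of Theorem \ref{teo2} then spends effort verifying that any leaf $S\subset\mathcal{L}_\varphi$ satisfies this hypothesis: a perfect-set/cardinality argument rules out $\varphi(M)\subset\mathcal{L}_\varphi$; injectivity of $\varphi$ rules out transversal intersections of $S$ with $\varphi(M)$; and the maximum principle together with the first step rules out coincident-tangential intersections, leaving only kissing points. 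Your proposal omits all of these checks. To make your route work you would either have to rule out kissing points so that the closure really is an $H$-lamination, or reprove stability by hand in the presence of one-sided touching, which is exactly what the paper's Theorem \ref{Theobessa} does.
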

 \begin{remark}Recently W. Meeks and A. Ramos \cite{meeks2019properly} proved that   complete immersed surfaces of finite topology with mean curvature bounded above in a hyperbolic three-manifold  $N$ with sectional curvatures $K_{N}\leq -a^2\leq 0$ under certain assumptions on the  injectivity radius along the ends are proper.  \end{remark}

The hypothesis on the Ricci curvature  in Theorem \ref{intersect} can be relaxed to $Ric_{_P}\geq -2$ as well as  the curvature assumption of $M$ and  the compactness of $N$ if one assumes the existence of a minimizing geodesic realizing the distance ${\rm dist}(M,N)$ and yet yielding a stronger statement, see   Theorem \ref{splitting} below. It   can be viewed as the analogous for $1$-surfaces of \cite[Thm.3.1]{galloway1991intersections}.

\begin{theorem}\label{splitting}
Let $P$ be a complete three-manifold with  $Ric_P\geq -2$. Let $ M$ and $N$ be complete  immersed $1$-surfaces of $P$  that do not intersect and    the distance ${\rm dist}(M,N)$ is realized. If $N$ is proper and $M$ lies in a mean convex component of $P\backslash N$, then 
\begin{itemize}
\item[a)] $M$ and $N$ are embedded totally umbilical equidistant $1$-surfaces.
\item[b)] $M$ and $N$ bound an open connected region in $P$ whose closure is isometric to $[0, l] \times N$, endowed with the metric $dt^2+e^{2t}g$, where $g$ denotes the metric of $N$. If $M$ and $N$ are compact, then  each of them is separating, and the  mean convex component of $P\backslash N$ is isometric to $[0,+\infty) \times N$ with the same metric as before. In particular, $P$ can not be compact.
\end{itemize}
\end{theorem}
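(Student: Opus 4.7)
The plan is to extract a minimizing geodesic between $M$ and $N$, combine the Riccati equation for the shape operators of equidistant hypersurfaces with the hypothesis $\operatorname{Ric}_P\geq -2$, and propagate the resulting pointwise rigidity to a global warped-product splitting, in the spirit of \cite{galloway1991intersections}. Set $l=\operatorname{dist}(M,N)>0$, fix $q_0\in N$, $p_0\in M$ realizing this distance, and let $\gamma\colon[0,l]\to P$ be a unit-speed minimizing geodesic from $q_0$ to $p_0$. By the first variation of arclength, $\gamma'(0)\perp T_{q_0}N$ and $\gamma'(l)\perp T_{p_0}M$, while the assumption that $M$ lies in the mean convex component of $P\setminus N$ forces $\gamma'(0)$ to be parallel to $\vec H_N(q_0)$, so that $N$ has mean curvature $+1$ with respect to $\nu_N(q_0):=\gamma'(0)$. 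Minimality of $\gamma$ prevents focal points along $\gamma|_{(0,l]}$, hence $d(\cdot,N)$ is smooth in a tubular neighborhood of $\gamma$ and its level sets $\Sigma_t$ foliate it.

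The analytic heart is the Riccati equation $A'(t)=A(t)^2+R_{\gamma'(t)}$ for the shape operator $A(t)$ of $\Sigma_t$ along $\gamma$ with respect to $\gamma'(t)$. Taking traces and using Cauchy--Schwarz ($\operatorname{tr}(A^2)\geq 2H^2$ on the 2-dimensional tangent space) together with $\operatorname{Ric}_P\geq -2$, the mean curvature $H(t)=\tfrac12\operatorname{tr}A(t)$ satisfies
\begin{equation*}
H'(t)\geq H(t)^2-1,\qquad H(0)=1.
\end{equation*}
Comparison with the steady state $\tilde H\equiv 1$ gives $H(t)\geq 1$ on $[0,l]$. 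At $t=l$, $\Sigma_l$ is tangent to $M$ at $p_0$, and $M$ lies locally on the $+\gamma'(l)$-side of $\Sigma_l$ because $p_0$ minimizes $d(\cdot,N)|_M$; the graph/second-fundamental-form comparison yields $H_M(p_0)\geq H(l)\geq 1$. Since $|H_M|=1$ we must have $H_M(p_0)=1$ with $\vec H_M(p_0)=\gamma'(l)$, so $H(l)=1$. A strict excursion $H(t_0)>1$ would make $H$ blow up before $l$ via the scalar ODE $\tilde H'=\tilde H^2-1$, contradicting $H(l)=1$; hence $H\equiv 1$ on $[0,l]$. The equality cases then give $\operatorname{tr}(A^2)=2H^2=2$ and therefore $A(t)\equiv I$ (Cauchy--Schwarz equality), together with $\operatorname{Ric}_P(\gamma',\gamma')\equiv -2$ along $\gamma$.

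I then propagate this rigidity to a global splitting. In Fermi coordinates $(t,q)\in[0,l]\times U$ around $\gamma$, the identity $\partial_t g_t=2g_t\circ A(t)=2g_t$ integrates to $g_t=e^{2t}g_0$, so a neighborhood of $\gamma$ is isometric to $([0,l]\times U,\,dt^2+e^{2t}g)$. The tangency principle (strong maximum principle) applied to the two $1$-surfaces $M$ and $\Sigma_l$, tangent at $p_0$ with comparable mean curvatures, gives $M=\Sigma_l$ near $p_0$. Letting
\begin{equation*}
\mathcal{S}=\{q\in N:\exp_q(t\nu_N(q))\text{ is a minimizing geodesic from }N\text{ to }M\text{ of length }l\},
\end{equation*}
one verifies that $\mathcal{S}$ is nonempty, open (by repeating the local argument at every point of $\mathcal{S}$), and closed (using completeness of $P$, properness of $N$, and continuity of the distance function). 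Connectedness of $N$ forces $\mathcal{S}=N$, so $\exp_N^{\perp}\colon[0,l]\times N\to P$ is an isometric immersion onto the closed slab bounded by $N$ and $M$. Boundedness of $A(t)\equiv I$ rules out focal points, upgrading the immersion to an embedding and giving (a) and the slab part of (b).

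For the compact case, the slab description immediately shows $M$ and $N$ separate $P$; to extend the splitting to $[0,+\infty)\times N$ I would iterate the rigidity starting from $M$, which is itself totally umbilic with $\vec H_M=\gamma'(l)$, and use a maximal-domain argument based on completeness of $P$ and the absence of focal points in the equality case. The unbounded warped factor then precludes compactness of $P$. I expect the main technical obstacle to be precisely this extension beyond $t=l$: the Riccati subsolution $H'\geq H^2-1$ does not on its own prevent $H>1$ past the slab, so the equality case must be revived afresh on the far side of $M$ by leveraging its mean convexity, rather than propagated automatically.
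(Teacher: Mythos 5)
Your treatment of part (a) and the finite slab in part (b) follows essentially the same route as the paper: minimizing geodesic, Riccati comparison $H'\geq H^2-1$ for equidistant surfaces, Newton's inequality, tangency of $V_l$ with $M$, the maximum principle to force $H\equiv 1$, equality in Newton's inequality to get $A\equiv I$ and ${\rm Ric}(\nu,\nu)\equiv-2$, integrating $g_t'=2g_t$ to obtain $dt^2+e^{2t}g$, and an open--closed continuation over $N$. Two small remarks on that part. First, the ``strict excursion forces blow-up before $l$'' reasoning is not quite right as stated (the blow-up time of $\coth$ depends on the value, not merely on strictness, and need not occur before $l$); the clean argument is simply that $H'\geq H^2-1\geq 0$ once $H\geq 1$, so $H$ is nondecreasing, and $H(0)=H(l)=1$ forces $H\equiv 1$. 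Second, the embeddedness of $M$ and $N$ is not an automatic consequence of ruling out focal points of the normal exponential map; in the paper it is extracted from the continuation step via the maximum principle, which kills any tangential self-touching of $M$ and hence any self-intersection.

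The genuine gaps are in the compact case of (b). You assert that ``the slab description immediately shows $M$ and $N$ separate $P$.'' It does not: an embedded slab $[0,l]\times N$ with boundary $N\sqcup M$ does not by itself prevent $N$ from being non-separating (in which case $P\setminus N$ is connected, and the mean convexity hypothesis is still formally satisfiable from both sides). The paper handles this with a real argument: following Choe--Fraser it builds the cyclic cover $\hat P=\tilde P/\ker f_*$ associated to the map $f:P\to\mathbb S^1$ dual to the two-sided non-separating $N$, shows $\pi^{-1}(N)$ bounds an infinite-volume mean-convex piece of $\hat P$, and then derives a contradiction from Schoen's lower bound $(n-1)\operatorname{Vol}(\Omega)\leq\operatorname{Vol}(\partial\Omega)$ for mean-convex domains under ${\rm Ric}\geq-(n-1)$. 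You would need some version of this. For the isometry of the whole mean-convex component with $[0,+\infty)\times N$, you correctly flag that you cannot simply iterate the Riccati argument past $M$: on the far side of $M$ the boundary orientation is reversed and there is no comparison surface forcing a second equality case. The paper sidesteps this entirely by citing Croke--Kleiner's warped-product splitting theorem, which is precisely tailored to this situation (compact mean-convex boundary component with $H\equiv 1$ and ${\rm Ric}\geq-(n-1)$). Without that input, or an independent derivation of it, the half-infinite splitting remains unproved in your write-up.
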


The second  goal of this work is to prove versions of the strong half-space theorem for  complete $1$-surfaces immersed in hyperbolic space $\mathbb{H}^3$.  We start with a version for $1$-surfaces immersed in  $\mathbb{H}^3$ of the strong half-space theorem \cite[Thm.1.4]{bessa-jorge-oliveira} between the classes of complete minimal surfaces  with bounded curvature and complete minimal surfaces properly immersed in $\mathbb{R}^{3}$.

\begin{theorem}\label{half_bc}
Let $M$ be a complete $1$-surface immersed in $\mathbb{H}^3$ with bounded curvature and let $N$ be a complete $1$-surface properly immersed in $\mathbb{H}^3$. If $N$ is non-horospherical, then $M$ can not lie in any mean convex component of $\mathbb{H}^3\backslash N$.
\end{theorem}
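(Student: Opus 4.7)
I argue by contradiction: suppose $M$ is contained in a mean convex component $\Omega$ of $\mathbb{H}^{3}\setminus N$. The strategy is to reduce the situation to an application of Theorem \ref{splitting}, exploiting the fact that in $\mathbb{H}^{3}$ the only complete totally umbilical $1$-surfaces are the horospheres. The argument splits into three steps.

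\emph{Step 1 (Properness of $M$).} I would first upgrade the bounded-curvature hypothesis on $M$ to properness. Since $M$ has uniformly bounded second fundamental form, its closure $\overline{M}$ in $\mathbb{H}^{3}$ inherits a $C^{1,\alpha}$-lamination structure whose leaves are complete $1$-surfaces (the standard curvature estimate plus compactness argument, e.g.\ \`a la Meeks--Rosenberg). If $M$ were not properly immersed, $\overline{M}\setminus M$ would contain a limit leaf $L$; the limit-leaf principle for $H$-laminations (Meeks--P\'erez--Ros) then forces $L$ to be strongly stable. Because $\mathrm{Ric}_{\mathbb{H}^{3}}\equiv -2$, Proposition~\ref{MPR-a} does not rule out such leaves, but the classification of complete strongly stable $1$-surfaces of $\mathbb{H}^{3}$ identifies $L$ as a horosphere. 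Since $L\subset\overline{\Omega}\subset\Omega\cup N$, a tangential maximum-principle comparison between $L$ and $N$---direct if $L\cap N\neq\emptyset$, and via an auxiliary sweep of horospheres if $L\subset\Omega$---then forces $N$ itself to be a horosphere, contradicting the hypothesis. Therefore $M$ is properly immersed.

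\emph{Step 2 (Realizing the distance).} Now $M$ and $N$ are disjoint, properly immersed, complete $1$-surfaces, with $M$ in the mean-convex side of $N$ and $M$ of bounded curvature. I would invoke the Maximum Principle at Infinity for $1$-surfaces of $\mathbb{H}^{3}$ (produced as a by-product of the techniques, as announced in the abstract) to conclude that $\mathrm{dist}(M,N)$ is realized by a minimizing geodesic segment. Concretely, one takes a minimizing sequence $(p_{n},q_{n})\in M\times N$ and applies isometries of $\mathbb{H}^{3}$ sending $p_{n}$ to a fixed base point; the bounded curvature of $M$ yields subsequential convergence of the translates of $M$, while the properness of $N$ keeps the translates of $q_{n}$ in a compact set, producing a pair realizing the (translated) distance.

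\emph{Step 3 (Applying the splitting).} With the distance realized and $\mathrm{Ric}_{\mathbb{H}^{3}}\equiv -2$, Theorem~\ref{splitting} applies to the pair $(M,N)$ and forces $M$ and $N$ to be complete, embedded, totally umbilical, equidistant $1$-surfaces of $\mathbb{H}^{3}$. The only complete totally umbilical $1$-surfaces of $\mathbb{H}^{3}$ are the horospheres, so $N$ is a horosphere, contradicting the non-horospherical hypothesis on $N$. This completes the contradiction.

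The main obstacle is Step~1: transforming the bounded-curvature assumption into properness. This requires the classification of complete strongly stable $1$-surfaces in $\mathbb{H}^{3}$ as a substitute for the strict version of Proposition~\ref{MPR-a}, together with a careful maximum-principle comparison between a potential limit leaf and $N$, complicated by the fact that $N$ is not assumed to have bounded curvature.
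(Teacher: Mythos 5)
Your plan takes a genuinely different route from the paper's: the paper's proof of Theorem~\ref{half_bc} rests entirely on Lemma~\ref{horosphere_lim_set}, which produces a complete strongly stable $1$-surface (hence a horosphere, by \cite[Thm.2.13]{MPR}) separating $\overline{M}$ from $\partial\Omega$ by minimizing the functional $A(\partial Q)-2V(Q)$ in the spirit of Mazet's variational argument, and then concludes by applying \cite[Thm.7]{M} to the proper surface $N$ and that separating horosphere. Your plan instead tries to upgrade $M$ to proper, realize the distance between two proper surfaces, and then invoke Theorem~\ref{splitting}. The first step is plausibly salvageable (though the "auxiliary sweep of horospheres" really has to be the Rodriguez--Rosenberg/Mazet horosphere theorem, not an elementary sweep, since a family of horospheres can escape to infinity without a first contact); but Step 2 contains a genuine gap.

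The problem with Step~2 is twofold. First, the Maximum Principle at Infinity proved in this paper (Theorem~\ref{maximum-principle-infinity}) only applies to parabolic surfaces with nonempty boundary and its proof in turn relies on Theorem~\ref{parabolic_half}, so invoking it here is not available (and would be close to circular). Second, the translation/compactness argument you sketch does not realize $\mathrm{dist}(M,N)$ between the \emph{original} $M$ and $N$: translating minimizing points $p_n$ to a base point lets you extract a limit of translated copies of $M$ (since $M$ has bounded curvature), but $N$ is \emph{not} assumed to have bounded curvature, so the translated copies of $N$ have no reason to subconverge. Even if they did, you would obtain two limit surfaces $M_\infty$, $N_\infty$ realizing a distance and could only conclude $N_\infty$ is a horosphere, which does not contradict non-horosphericity of $N$ itself. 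Realizing the distance between a bounded-curvature $1$-surface and a merely proper $1$-surface in $\mathbb{H}^3$ is essentially as hard as the theorem; this is exactly the difficulty that Lemma~\ref{horosphere_lim_set} is designed to circumvent by working inside large balls and producing a strongly stable barrier, rather than trying to realize the distance globally.
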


\begin{remark}  It should be noticed that if $N$ is a non-horospherical properly embedded  $1$-surface of $\mathbb{H}^3_{\raisepunct{,}}$ then each annular end is asymptotic to a catenoid cousin \cite{CHR}. This property contrast from the case of  minimal surfaces properly embedded in $\mathbb{R}^3_{\raisepunct{,}}$ where an annular end can be asymptotic to a  planar  or a catenoid end.
\end{remark}

 The Theorem \ref{half_bc} yields   a version  of the beautiful  Catenoid theorem due to Rodrigues and Rosenberg \cite{RR}, improved by Mazet in \cite{M}, to immersed $1$-surfaces with bounded curvature.

\begin{corollary}\label{corollary_catenoid}
Let $M$ be a complete $1$-surface immersed in $\mathbb{H}^3$ with bounded curvature, and let $C$ be a Catenoid cousin of $\mathbb{H}^3_{\raisepunct{.}}$ Then, $M$ can not lie on the mean convex side of $\mathbb{H}^3 \backslash C$.
\end{corollary}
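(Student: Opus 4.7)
The plan is to deduce the corollary as a direct application of Theorem \ref{half_bc} with $N=C$. To do this, I only need to verify the two standing hypotheses on $C$ appearing in that theorem: namely, that a catenoid cousin is a complete properly immersed $1$-surface of $\mathbb{H}^3$, and that it is non-horospherical. The statement about the mean convex side of $\mathbb{H}^3\setminus C$ then coincides, by definition, with the statement about a mean convex component appearing in Theorem \ref{half_bc}.

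For the first hypothesis I would recall Bryant's construction from \cite{Br}: the catenoid cousins form a one-parameter family of rotationally symmetric $1$-surfaces obtained from the Bryant representation formula. Geometrically they are embedded annuli of revolution in $\mathbb{H}^3$ with two ends of finite total curvature, and they are properly embedded (in particular properly immersed) and complete. This is standard in the literature on Bryant surfaces (see e.g. \cite{UY1,CHR}), so I would invoke it without reproving it.

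For the second hypothesis I would note that the only complete totally umbilical $1$-surfaces of $\mathbb{H}^3$ are the horospheres. A catenoid cousin is by construction a non-umbilical surface of revolution (its two principal curvatures differ since it is not a horosphere, and the Hopf differential is non-trivial); hence it is not a horosphere, and in particular is non-horospherical in the sense used in Theorem \ref{half_bc}.

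With these two facts in hand, Theorem \ref{half_bc} applied to $N=C$ gives immediately that the complete bounded-curvature $1$-surface $M$ cannot lie in any mean convex component of $\mathbb{H}^3\setminus C$; since the mean convex side of $C$ is by definition one such component, the corollary follows. The only potentially non-trivial step is making sure the orientation conventions used in defining $\overrightarrow{H}_{\!\!_N}$ for the catenoid cousin agree with those of Theorem \ref{half_bc}, but since $C$ separates $\mathbb{H}^3$ into two components and the mean curvature vector selects one of them unambiguously, this is essentially a bookkeeping check rather than a genuine obstacle.
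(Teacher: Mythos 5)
Your proposal is correct and matches the paper's intent: the corollary is stated as a direct consequence of Theorem \ref{half_bc}, obtained by taking $N=C$ and checking that a catenoid cousin is a complete, properly immersed, non-horospherical $1$-surface of $\mathbb{H}^3$. The paper gives no separate proof, so your verification of the hypotheses is precisely the argument being invoked.
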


In \cite[Thm.1.1]{bessa-jorge-pessoa} the authors proved a new version of the strong half-space theorem  between the classes of complete  minimal surfaces  with bounded curvature and of parabolic  minimal surfaces of $\mathbb{R}^{3}$. Recall that a manifold is said to be parabolic (recurrent) if the standard Brownian motion visits any open set at arbitrary large moments of time with probability one, and it is transient otherwise. The simplest examples of  parabolic $1$-surfaces in $\mathbb{H}^3$ are  the immersions conformally to $\mathbb{C}$ or $\mathbb{C}\backslash\{0\}$, for instance horospheres, Enneper and Catenoid cousins (see \cite[Sec.9.3]{GG} for other examples). Our next contribution is a version of \cite[Thm.1.1]{bessa-jorge-pessoa} for $1$-surfaces immersed in $\mathbb{H}^3_{\raisepunct{.}}$

\begin{theorem}\label{parabolic_half}
Let $M$ be a parabolic $1$-surface immersed in $\mathbb{H}^3$ and $N$ be a complete $1$-surface properly immersed in $\mathbb{H}^3$ with bounded curvature. Then, $M$ can not lie in a mean convex  component of $\mathbb{H}^3\backslash N$, unless they are parallel $1$-surfaces.\footnote{In this case $N$ is a horosphere and $M$ could be a horosphere minus a set of zero capacity.}
\end{theorem}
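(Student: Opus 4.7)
The plan is to prove first that the parabolic surface $M$ must be contained in a horosphere, and then to deduce that $N$ is itself a parallel horosphere. The central ingredient is that every Busemann function of $\mathbb{H}^3$ is superharmonic along any $1$-surface immersed in $\mathbb{H}^3$.

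First, fix $p_\infty\in\partial_\infty\mathbb{H}^3$ and let $\beta=\beta_{p_\infty}$ be its Busemann function. Since the level sets of $\beta$ are horospheres, which are totally umbilical with principal curvatures identically $1$, the Hessian of $\beta$ in $\mathbb{H}^3$ is $\mathrm{Hess}(\beta)=-g+d\beta\otimes d\beta$. Applying the standard immersion formula $\Delta^M(f\circ\iota)=\mathrm{tr}_{TM}\mathrm{Hess}(f)+df(\vec{H}_M)$, with $\vec{H}_M=2\nu_M$ since $H_M=1$, and setting $s=\langle\nabla\beta,\nu_M\rangle$, a direct calculation yields
$$
\Delta^M\beta=-(1-s)^2\leq 0,
$$
with equality at a point $p\in M$ if and only if $\nu_M(p)=\nabla\beta(p)$. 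Thus every Busemann function is superharmonic along $M$.

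Next, the key step is to select $p_\infty$ so that $\beta_{p_\infty}|_M$ is bounded below; this is the delicate point and uses the hypotheses on $N$. Since $N$ is properly immersed with bounded curvature, the asymptotic structure of its ends (analogous to \cite{CHR}) restricts the ideal boundary $\partial_\infty\Omega$ of the mean convex component $\Omega\supset M$ to a proper subset of $S^2=\partial_\infty\mathbb{H}^3$. Taking $p_\infty\in S^2\setminus\partial_\infty\Omega$, a horoball based at $p_\infty$ of sufficiently small size is disjoint from $\overline\Omega$, yielding a constant $C$ with $\beta_{p_\infty}\geq C$ on $\Omega\supset M$. With superharmonicity and this lower bound at hand, parabolicity of $M$ forces $\beta_{p_\infty}|_M$ to be constant, and therefore $M$ lies in the horosphere $\mathcal{H}=\{\beta_{p_\infty}=c\}$. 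Since $M\to\mathcal{H}$ is a local isometry into the flat plane $\mathcal{H}$ and $M$ is parabolic, the image of $M$ equals $\mathcal{H}$ up to a set of zero capacity, which is exactly the exceptional situation indicated in the footnote.

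It remains to show that $N$ is a parallel horosphere. The horosphere $\mathcal{H}$ is a complete $1$-surface with bounded curvature lying in the mean convex component of $\mathbb{H}^3\setminus N$; Theorem~\ref{half_bc} applied with $\mathcal{H}$ in place of the bounded-curvature surface then forces $N$ to be horospherical, i.e.\ $N$ itself is a horosphere. A direct inspection in the upper half-space model shows that two horospheres, one of which lies on the mean convex side of the other, must share their asymptotic point; hence $M$ and $N$ are parallel horospheres, as claimed. The main obstacle in this plan is the middle step of controlling the asymptotic geometry of $\Omega$ so as to produce the ideal point $p_\infty$ with $\beta_{p_\infty}|_M$ bounded below, which is precisely what requires the bounded-curvature and proper-immersion hypotheses on $N$.
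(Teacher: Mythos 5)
There is a sign error at the core of your argument that cannot be patched. In $\mathbb{H}^3$, the Busemann function with the standard normalization $\beta_{p_\infty}(x)=\lim_{t\to\infty}\bigl(d(x,\gamma(t))-t\bigr)$ (so that $\beta\to-\infty$ towards $p_\infty$) satisfies
$\mathrm{Hess}(\beta)=g-d\beta\otimes d\beta\geq 0$: it is \emph{convex}, being a limit of distance functions. With $\vec H_M=2\nu_M$ and $s=\langle\nabla\beta,\nu_M\rangle$ one therefore gets
\[
\Delta^M\beta=\mathrm{tr}_{TM}\mathrm{Hess}(\beta)+\langle\nabla\beta,\vec H_M\rangle=(1+s^2)+2s=(1+s)^2\geq 0,
\]
i.e.\ $\beta$ is \emph{subharmonic} along any $1$-surface, not superharmonic, and the sign of $s$ cannot change this. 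Your $\mathrm{Hess}(\beta)=-g+d\beta\otimes d\beta$ corresponds to the opposite sign convention $\tilde\beta=-\beta$, which does give a superharmonic restriction, but then a horoball at $p_\infty$ disjoint from $\overline\Omega$ produces an \emph{upper} bound $\tilde\beta\leq C$ on $\Omega$, not a lower bound. Either way you end up with a subharmonic function bounded below or a superharmonic function bounded above, and parabolicity gives no Liouville conclusion for these combinations. To make the Busemann approach work you would need $\beta$ bounded above on $M$, i.e.\ $M$ contained in a horoball at $p_\infty$; but that is essentially the conclusion you are trying to reach, so it cannot be assumed. (As a secondary issue, your claim that $\partial_\infty\Omega\subsetneq S^2$ leans on the asymptotic analysis of \cite{CHR}, which is stated for properly \emph{embedded} surfaces, whereas $N$ here is only properly immersed; that step would also need justification.)

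The paper avoids these problems entirely by working with the distance function $t_N$ to $N$ rather than a Busemann function: after an isometry placing $M$ within a tubular neighbourhood of $N$, one sets $u=\max\{g\circ t_N\circ\varphi,0\}$ for a carefully chosen decreasing convex function $g$ so that $u$ is bounded by construction and $\Delta u\geq 0$ holds in the barrier sense (using supporting surfaces from \cite{GLM} to handle non-smooth points of $t_N$, and Riccati comparison for the mean curvature of parallel surfaces). Parabolicity then forces $u$ to be constant, which is contradicted by the fact that $u$ vanishes away from $N$ and is positive near $N$. This is the direction you would need to take; the Busemann function simply has the wrong concavity to serve as a test function here.
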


\begin{remark}\label{rmk_mari}
   E. Gama, J. Lira, L. Mari and A. Medeiros in \cite{GLM},   generalizing  results from \cite{RR, ros2010properly}  proved  
  a  theorem related  to Theorem \ref{parabolic_half}  in the case of parabolic surface immersed  into a region $\Omega$ whose boundary $\partial \Omega$ has bounded bending from outwards, remarkably including the case of smooth properly embedded $1$-surface.
\end{remark}

A careful analysis of the proof of Theorem \ref{parabolic_half} shows that we can extend it for surfaces with variable mean curvature provided $\sup H_{M} \geq  \inf H_{N}$. This last inequality is sufficient to apply the Liouville theorem for bounded subharmonic functions which is  equivalent  to parabolicity \cite[Thm.5.1]{grigoryan}. On the other hand, the strict inequality $\sup H_{M} >  \inf H_{N}$ allows us to prove a version of Theorem \ref{parabolic_half} assuming that $M$ is only stochastically complete, compare with \cite[Thm.1.7]{bessa-jorge-pessoa}. 

A Riemannian manifold $M$ is said to be {\it stochastically complete} if for some $(x, t)\in M\times (0, +\infty)$ it holds that $\int_Mp(x,y,t)dy=1$, where $p(x, y, t)$ is the heat kernel of the Laplace operator. Stochastic completeness is equivalent for the following Liouville property: for all $\lambda>0$, any bounded, non-negative solution of the subequation $\Delta u \geq \lambda u$ is identically zero. In particular, every parabolic manifold is stochastically complete. In the following theorem we also give a version of \cite[Thm.1.9]{bessa-jorge-pessoa} for surfaces immersed in  $\mathbb{H}^{3}_{\raisepunct{.}}$

\begin{theorem}\label{stochastic_half-1-surfaces}
Let $N$ be a complete surface properly immersed in $\mathbb{H}^3$ with bounded curvature and let $M$ be a stochastically complete surface of $\mathbb{H}^3$. 
\begin{itemize}
\item[i)] If $\sup H_{M} < \inf H_{N}$, then $M$ can not lie in a mean convex component of $\mathbb{H}^3\backslash N$.
\item[ii)] If $\sup H_{M} = \inf H_{N} > 1$, then $d(M,N) = 0$.
\end{itemize}
\end{theorem}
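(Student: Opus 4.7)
My plan is to apply the weak maximum principle characterization of stochastic completeness due to Pigola--Rigoli--Setti: $M$ is stochastically complete iff, for every $u\in C^{2}(M)$ bounded above and every $\varepsilon>0$, there exists $x_{\varepsilon}\in M$ with $u(x_{\varepsilon})>\sup_{M}u-\varepsilon$ and $\Delta_{M}u(x_{\varepsilon})<\varepsilon$. Both parts are proved by contradiction: assume $M\subset\Omega$, the mean convex component of $\mathbb{H}^{3}\setminus N$, and construct a bounded positive barrier on $M$ built from the distance function $\rho:=d(\cdot,N)$.

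The inputs are (a) a uniform tubular neighbourhood $\mathcal{T}_{\delta}(N)$, provided by the proper embedding and the bounded second fundamental form of $N$, on which $\rho$ is smooth and $\nabla\rho$ points into $\Omega$; (b) the Riccati equation $A'=A^{2}-I$ for the shape operator of the equidistants $N_{t}$ in $\mathbb{H}^{3}$, giving the mean-curvature inequality $(H_{N_{t}})'\geq H_{N_{t}}^{2}-1$; and (c) the standard restriction formula, which upon decomposing $\nu_{M}=\cos\theta\,\nabla\rho+\sin\theta\,e$ with $e\in T_{x}N_{\rho}$ unit simplifies to
\[
\Delta_{M}\rho=-2H_{N_{\rho}}-\sin^{2}\theta\,\kappa_{e}+2H_{M}\cos\theta,
\]
where $H_{N_{\rho}}$ is the mean curvature of $N_{\rho}$ with unit normal $\nabla\rho$ and $\kappa_{e}=\mathrm{Hess}\,\rho(e,e)$. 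From (b), when $\inf H_{N}\geq 1$ one has $H_{N_{t}}\geq\inf H_{N}$ for all admissible $t\geq 0$; in general a corresponding bound holds in a sub-tube $\{\rho<\eta\}$ by continuity of the Riccati flow.

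For Part (i), set $\delta_{0}:=\inf H_{N}-\sup H_{M}>0$ and choose $a>0$ large enough that $a+\kappa_{e}>0$ uniformly on $\mathcal{T}_{\delta}(N)$ (possible by the bounded curvature of $N$). Taking the bounded positive barrier $u:=e^{-a\rho}\big|_{M}$, a direct computation gives
\[
\Delta_{M}u=u\bigl[(a^{2}+a\kappa_{e})\sin^{2}\theta+2a(H_{N_{\rho}}-H_{M}\cos\theta)\bigr]\geq 2a\,u\,(H_{N_{\rho}}-H_{M}),
\]
since the bracketed expression is minimised at $\sin\theta=0$, $\cos\theta=1$. The weak maximum principle then produces a sequence $x_{k}\in M$ with $u(x_{k})\to\sup_{M}u=e^{-at_{0}}$ (so $\rho(x_{k})\to t_{0}:=\inf_{M}\rho$) and $\Delta_{M}u(x_{k})<1/k$. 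When $t_{0}=0$, Riccati continuity gives $H_{N_{\rho(x_{k})}}-H_{M}(x_{k})\geq\delta_{0}/2$ for large $k$, hence $\Delta_{M}u(x_{k})\geq\tfrac{1}{2}a\delta_{0}\sup_{M}u$, contradicting $\Delta_{M}u(x_{k})<1/k$. When $t_{0}>0$, I would rerun the argument after replacing $N$ by the equidistant $N_{t_{0}}$, whose infimum mean curvature dominates $\inf H_{N}$ by Riccati monotonicity in the regime $\inf H_{N}\geq 1$. Part (ii) then follows perturbatively: if $d(M,N)=:2d_{0}>0$, then for $0<\eta<d_{0}$ the equidistant $N_{\eta}$ satisfies $\inf H_{N_{\eta}}>\inf H_{N}=\sup H_{M}$ strictly (strict Riccati monotonicity, since $\inf H_{N}>1$ forces $(H_{N_{t}})'>0$), so Part (i) applied to the pair $(M,N_{\eta})$ yields the desired contradiction.

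The main obstacle is the case $\inf H_{N}<1$ in Part (i), where Riccati monotonicity can fail and $H_{N_{t}}$ may drop below $\sup H_{M}$ for large $t$; handling it requires either truncating the barrier to a sub-tubular neighbourhood via a smooth cutoff or iterating on nested equidistants to stay within the regime where $H_{N_{\rho}}>\sup H_{M}$. A secondary technical point is the careful tracking of sign conventions for mean curvature and for $\Delta\rho$, needed to ensure that the Riccati inequality points in the direction required for the weak maximum principle to close the contradiction.
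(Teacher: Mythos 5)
Your overall strategy---a decreasing barrier built from the distance to $N$, the Riccati comparison for the mean curvature of the equidistants, and a potential-theoretic characterization of stochastic completeness---is essentially the paper's, which takes the logarithmic barrier $g\circ t_N$ and invokes the Liouville property for $\Delta u\geq\lambda u$ (equivalent to the Omori--Yau weak maximum principle you cite). However, two steps do not go through as written. First, $\rho=d(\cdot,N)$ is not $C^2$ on the tube: even inside the focal-free neighbourhood, $N$ is only properly immersed and $\partial\Omega$ is only piecewise smooth, so a point of $M$ may have several nearest points on $N$ and $\rho$ is merely Lipschitz. The Pigola--Rigoli--Setti statement you quote requires $u\in C^{2}(M)$, so it cannot be applied directly to $u=e^{-a\rho}$. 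The paper resolves this by working in the barrier sense: at each $y$ it constructs, via \cite[Lem.1]{GLM}, a smooth supporting surface $S_z^{\mu}$ whose oriented distance $t_z^{\mu}$ is a genuine $C^{2}$ support function for $t_{\!_N}$ from above with $y\notin\text{cut}(S_z^{\mu})$ and $H_{z}^{\mu}(z)>1-\mu$. You would need a comparable barrier-sense version of the weak maximum principle to close your argument.

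Second, ``when $t_{0}>0$, rerun the argument with $N_{t_{0}}$ in place of $N$'' is not legitimate as stated. The equidistant $N_{t_{0}}$ exists as a nondegenerate immersion only inside the focal-free tube, whose uniform width is finite and controlled by $\sup|II_{N}|$; if $t_{0}$ exceeds it, $N_{t_{0}}$ is undefined, and even when it exists $N_{t_{0}}$ need not be proper nor retain a uniform curvature bound (your constant $c$ degenerates as one approaches a focal distance). The paper sidesteps this by truncating the barrier to a thin tube, setting $u=\max\{v,0\}$ with $v$ supported in $V_{+}(\varepsilon)$, and reducing to the case $M\cap V_{+}(\varepsilon/8)\neq\emptyset$ by applying a hyperbolic \emph{isometry} to $N$ rather than a normal flow, which preserves all uniform bounds. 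This same truncation also disposes of the obstacle you flag when $\inf H_{N}<1$ in part (i): on a tube of thickness $O(\varepsilon)$ the Riccati flow cannot move $H_{N_{t}}$ far from its initial value (the paper absorbs the drift into the $O(\mu)$ error), so the long-time behaviour of $H_{N_{t}}$ never enters.
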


It is well-understood in the literature that strong half-space theorems give rise to Maximum Principle at Infinity involving surfaces with non-empty boundary. It can be viewed as a generalization of Hopf's Maximum Principle for surfaces with constant mean curvature and it has been investigated in several works, see \cite{langevin-rosenberg,meeks-rosenberg-mp,soret-mp} for minimal surfaces, \cite{lima,meeks-lima} for $H$-surfaces, and \cite{GLM,meeks-rosenberg,ros2010properly} for further generalizations.

 In \cite[Thm.4.2]{meeks-lima}, under a suitable hypothesis of ideal contact at infinity, it was established a Maximum Principle at Infinity for proper surfaces of $\mathbb{H}^3$ with bounded mean curvature, but not both equal to $1$. In \cite[Thm.1]{GLM} it was  proved  a Maximum Principle at Infinity for parabolic $1$-surfaces with boundary immersed into a region $\Omega$ of a  Riemannian manifold $P$ with Ricci curvature bounded ${\rm Ric}\geq -2$, and whose $\partial \Omega$ has bounded curvature and bounded bending from outwards (see Remark \ref{rmk_mari}). Recall that a surface $M$ with non-empty boundary $\partial M$ is said to be parabolic if the absorbed Brownian motion is recurrent, that is, any Brownian path starting from an interior point of $M$, reaches the boundary (and dies) in a finite time with probability $1$ (see \cite{perez-lopez}). From a potential-theoretic viewpoint \cite[Prop.10]{pessoa-pigola-setti}, the parabolicity is equivalent to the following Ahlfors maximum principle: every weak bounded solution $u \in C^0(M)\cap W^{1,2}_{\text{loc}}({\rm int} M)$ of the subequation $\triangle u \geq 0$ in ${\rm int} M$ must satisfies
\begin{eqnarray*}
\sup_{M} u = \sup_{\partial M} u.
\end{eqnarray*}
It should be remarked that the usual definition of parabolicity for surfaces with boundary for which the Brownian motion reflects at $\partial M$ is stronger than the above notion (see \cite{impera-pigola-setti,pessoa-pigola-setti}).

In our last result we provide the hyperbolic version of the Maximum Principle at Infinity proved in \cite[Thm.1.11]{bessa-jorge-pessoa} for parabolic $1$-surfaces.

\begin{theorem}\label{maximum-principle-infinity}
Let $M$ and $N$ be disjoint immersed surfaces of $\mathbb{H}^3_{\raisepunct{.}}$ Assume $M$ is parabolic with non-empty boundary $\partial M,$ and $N$ is a complete surface properly immersed with bounded curvature. If $\sup_{\!_M}\vert H_{\!_M}\vert \leq \inf_{\!_N}H_{\!_N}>0$ and $M$ lies in a mean convex component of $\mathbb{H}^3\backslash N$, then
\[
{\rm dist}(M,N) = {\rm dist}(\partial M,N).
\]
\end{theorem}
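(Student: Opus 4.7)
My plan is to argue by contradiction using the Ahlfors-type maximum principle that characterizes parabolicity of $M$ with boundary. Suppose $\mathrm{dist}(M, N) < \mathrm{dist}(\partial M, N)$, and pick $\epsilon_0 > 0$ with $\mathrm{dist}(M, N) + 2\epsilon_0 < \mathrm{dist}(\partial M, N)$. I will construct a bounded continuous function $v$ on $M$ that is weakly subharmonic on $\mathrm{int}(M)$ and satisfies $\sup_M v > \sup_{\partial M} v$; this contradicts the Ahlfors principle recalled in the introduction.

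The natural starting object is the distance function $d(x) := \mathrm{dist}(x, N)$ on the mean convex component $\Omega$ of $\mathbb{H}^3 \setminus N$. Because $N$ is properly immersed with bounded second fundamental form and $\mathbb{H}^3$ has sectional curvature $-1$, a standard Jacobi field / focal point comparison produces a uniform one-sided tubular neighborhood $U \subset \Omega$ on which $d$ is smooth, with smooth parallel level surfaces $N_t = d^{-1}(t)$. Along each normal geodesic their principal curvatures satisfy the hyperbolic Riccati equation $\kappa'(t) = \kappa(t)^2 - 1$, so the mean curvature $H(t)$ of $N_t$ obeys
\[
H'(t) = H(t)^2 + \sigma(t)^2 - 1, \qquad \sigma = (\kappa_1 - \kappa_2)/2.
\]
In particular $H'(t) \geq H(t)^2 - 1$, which forces $H(t) \geq H_0 := \inf_N H_N$ on $U$: globally whenever $H_0 \geq 1$ (since then $H(t)$ is non-decreasing), and locally near $t = 0$ by continuity in the general case.

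Setting $u := d \circ \iota$ on $M$, the pull-back Laplacian formula
\[
\Delta_M u = \bar\Delta d - \overline{\mathrm{Hess}}\, d(\nu_M, \nu_M) + \langle \bar\nabla d, \vec H_M \rangle
\]
expands, in principal directions of $N_{u(p)}$, as a combination of $H(u)$, of the principal direction curvature $\kappa_w$ of $N_{u(p)}$ along the tangential component of $\bar\nabla d$ on $M$, and of $H_M \cos\theta$, where $\theta$ is the angle between $\nu_M$ and $\bar\nabla d$. I then take $v := -\psi(u)$ for a bounded, strictly increasing, smooth profile $\psi : [0, +\infty) \to \mathbb{R}$ to be chosen. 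Since $\Delta_M(\psi \circ u) = \psi''(u)\,|\nabla_M u|^2 + \psi'(u)\,\Delta_M u$ and $|\nabla_M u|^2 = \sin^2\theta$, the hypothesis $|H_M| \leq H_0$ together with the Riccati lower bound on $H(t)$ reduces weak subharmonicity of $v$ to a pointwise ODE inequality for $\psi$. A natural successful ansatz is a profile whose $\psi'$ is proportional to the inverse normal Jacobian of the umbilic model surface of constant mean curvature $H_0$ in $\mathbb{H}^3$; it is designed so that $\psi''(u)\sin^2\theta$ absorbs the anisotropic $\sin^2\theta\,\kappa_w$ contribution, leaving the clean inequality $H_M \cos\theta \leq H(u)$ forced by $|H_M| \leq H_0 \leq H(u)$.

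Once $v$ is in hand, extend $\psi$ to be eventually constant beyond the tubular neighborhood so that $v$ is bounded on all of $M$. Interior points with $d\circ\iota$ close to $\mathrm{dist}(M,N)$ give $v > -\psi(\mathrm{dist}(M,N) + \epsilon_0)$, while $v|_{\partial M} \leq -\psi(\mathrm{dist}(\partial M, N))$; since $\psi$ is strictly increasing, $\sup_M v > \sup_{\partial M} v$, contradicting the Ahlfors principle. The principal obstacle is the subharmonicity verification: $\Delta_M u$ couples $N$-side data ($H(u)$ and $\kappa_w$) with $M$-side data ($H_M$ and $\theta$), and $\psi$ must be tuned so that $\psi''(u)\sin^2\theta$ precisely balances the $\sin^2\theta\,\kappa_w$ term. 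The hyperbolic Riccati inequality $H'(t) \geq H(t)^2 - 1$ and the strict positivity $H_0 > 0$ are exactly what make the ansatz succeed; the borderline case $H_0 = 1$ reproduces the setting of Theorem \ref{parabolic_half}.
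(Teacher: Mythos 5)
Your overall strategy is the same as the paper's: build a bounded subharmonic function from a decreasing concave profile of the distance to $N$, control the Laplacian via the hyperbolic Riccati comparison, and invoke the Ahlfors-type maximum principle for parabolic surfaces with boundary. That said, there are three places where you pass over genuine technical obstacles that the paper expends real effort on.

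First, you assert that $d(x)=\mathrm{dist}(x,N)$ is smooth on a uniform one-sided tubular neighborhood by a ``Jacobi field / focal point comparison.'' Absence of focal points along a single normal geodesic controls the normal exponential map, but it does \emph{not} preclude the cut locus: $N$ is merely \emph{immersed}, and the boundary $\partial\Omega$ of the mean-convex component is only piecewise regular, so points of $\Omega$ arbitrarily close to $\partial\Omega$ can be equidistant to two distinct sheets or pieces of $N$. Consequently $t_N$ is only Lipschitz there. The paper handles this by proving the inequality $\triangle v\geq 0$ in the \emph{barrier sense}, constructing at each point of the cut locus a smooth one-sided support function $t_z^\mu=\mathrm{dist}(\cdot,S_z^\mu)$ built on the supporting surfaces $S_z^\mu$ of \cite[Lem.1]{GLM}, whose mean curvature is $>1-\mu$. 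Your computation of $\Delta_M u$ is valid only at points where $t_N$ is smooth, so as written the pointwise inequality does not yield a weak (viscosity/barrier) subsolution on all of $\mathrm{int}\,M$.

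Second, extending $\psi$ to be ``eventually constant beyond the tubular neighborhood'' does not automatically preserve subharmonicity of $v=-\psi(u)$: in the transition region where $\psi'$ decays to zero one loses precisely the positivity that makes the Riccati estimate close. The paper instead chooses the specific profile $g(t)=\log\bigl(\tfrac{2+\varepsilon c}{2+4ct}\bigr)$, which is \emph{positive} on $V_+(\varepsilon/4)$ and crosses zero inside the domain of validity of the estimate, and then takes $u=\max\{v,0\}$; subharmonicity of the glued function then follows because the pointwise maximum of subharmonic functions is subharmonic. You need this (or an equivalent cutoff) to produce a \emph{globally} defined bounded weak subharmonic function in $C^0(M)\cap W^{1,2}_{\mathrm{loc}}(\mathrm{int}\,M)$ before the Ahlfors principle of \cite[Prop.10]{pessoa-pigola-setti} applies.

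Third, your $\epsilon_0$-contradiction requires that the interior points with $d\circ\iota$ near $\mathrm{dist}(M,N)$ actually lie inside the tubular neighborhood $V_+(\varepsilon)$ where $v$ is nontrivial; if $\mathrm{dist}(M,N)\geq\varepsilon$, the function $u=\max\{v,0\}$ vanishes identically and the inequality $\sup_M u>\sup_{\partial M}u$ fails vacuously. The paper resolves this with a preliminary translation argument reducing to $\mathrm{dist}(M,N)=0$. You should either incorporate that reduction or observe directly that the desired equality is equivalent, after translating $N$, to the statement that the translated $u$ attains its supremum at $\partial M$.
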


\noindent \textbf{Acknowledgements.} This work was partially supported by Alexander von Humboldt Foundation and Capes-Brazil (Finance Code 001), and by CNPq-Brazil, Grants 303057/2018-1, 311803/2019-9 and 306738/2019-8. The third author is grateful to Professor Alexander Grigor'yan and the Faculty of Mathematics at the Universit\"at Bielefeld for their warm hospitality.

\section{Strong Stability of $H$-surfaces}\label{preli}

Let $P$ be a Riemannian three-manifold and let $\phi \colon M\to P$ be a surface isometrically immersed in $P$. Let $\Phi\colon (-\epsilon,\epsilon)\times M \to P$ be  a  variation of $M$ with $\Phi_t(p)=\Phi(t,p)$ and $\Phi (0,p) = \phi(p)$, where each $\Phi_t$ is an immersion of $M$ into $P$ for every $0< \vert t\vert <\epsilon$. For each $t\in (-\epsilon, \epsilon) $ we have the area function $A(t) = {\rm Area}(\Phi_t)$ and the volume function $V(t)$ induced by the immersion $\Phi$   given by
\[V(t)=\int_{[0,t]\times M}\Phi^*dV, \,\footnote{We  agree that $[0,t]=[t,0]$ if $t<0$} \] which measures the signed volume enclosed between $\Phi_0=\phi$ and $\Phi_t$.
Let us define the functional  $\mathcal J$ setting $\mathcal J(t) =  A(t) - 2H V(t)$.
The variational  vector field $X$ associated to $\Phi$ is defined by
$X=  {\partial_t \Phi}_{|_{t=0}} = \psi \nu$, for some $\psi \in C^{\infty}(M)$.
   It is not difficult to check that  $M$ is a stationary point for $\mathcal{J}$ if and only if it has constant mean curvature $H$.  

If we assume that $M$ is stationary, then the second variation formula of $\mathcal J(t)$ is given by
\begin{eqnarray*}
Q(\psi,\psi) &=&  -\int_{M}\psi L\psi\;d\sigma \\[0.2cm] 
&=& \int_{M} [|\nabla\psi|^2-(|II|^2+Ric_{_P}(\nu,\nu))\psi^2]\;d\sigma  \quad \forall \psi \in C^{\infty}_0(M),
\end{eqnarray*}
where $Ric_{_P}$ is the Ricci curvature of $P$,  $II$ is the second fundamental form of $M$ and  $L=\Delta+|II|^2+Ric_{_P}(\nu,\nu)$ is its Jacobi operator. An $H$-surface $M$ is said to be strongly stable if $Q(\psi,\psi)\geq 0$. This notion is  equivalent to the positiveness of the first eigenvalue of $L$ and to the existence of a positive smooth solution $u$ for the equation $L u = 0$ (see \cite{FC}). For $H$-surfaces there is also a weaker notion of stability associated to the isoperimetric problem, that is for minimizing the area of $M$ while keeping enclosed a constant volume. An $H$-surface is stable if $Q(\psi,\psi) \geq 0$ for every test $\psi \in C^{\infty}_0(M)$ satisfying $\int_{M}\psi d\sigma = 0$. Hence, strong stability implies stability, but not otherwise.



In this section we are interested in to study the strong stability of leaves from the limit set of surfaces  with bounded curvature. Let $\varphi \colon M \to P$ be a complete surface immersed into a complete three-manifold $P$. The limit set of $\varphi$, denoted by $\mathcal{L}_{\varphi}$, is the set 
$$
\mathcal{L}_{\varphi}=\{q\in P\colon \exists \{p_k\}\subset M,\mbox{dist}_M(p_0,p_k)\to\infty\mbox{ and } \mbox{dist}_P(q,\varphi(p_k))\to 0\}.
$$
It is plain to see that if $M$ is properly immersed, then $\mathcal{L}_{\varphi}=\emptyset$.

An important tool in our study is the maximum principle for $H$-surfaces. Suppose $M_1$ and $M_2$ are two smooth oriented surfaces of $P$ which are tangent at a point  $p\in M_1\cap M_2$ and have at $p$ the same oriented normal $\nu$. The point $p$ is called a point of common tangency. Around $p$ let us express $M_1$ and $M_2$ as graphs of functions $u_1$ and $u_2$ over the common tangent plane through $p$. We shall say that $M_1$ lies above $M_2$ near $p$, if $u_1 \geq u_2$ in a neighborhood of $p$. We can now state the following \textit{maximum principle} for $H$-surfaces (c.f. \cite{coskunuzer}).

\begin{lemma}\label{maximum-principle}
Let $M_1$ and $M_2$ be oriented surfaces immersed in a complete three-manifold $P$. Assume $M_1$ and $M_2$ have a point of common tangency $p$ and let $H_1$ and $H_2$ be their respective mean curvature functions with respect to the same normal. If $H_1 \leq H_2$ near $p$, then $M_1$ can not lie above $M_2$, unless $M_1$ coincides with $M_2$ in a neighborhood of $p$. 
\end{lemma}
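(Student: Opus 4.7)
The plan is to reduce this geometric statement to the classical strong maximum principle for a linear elliptic operator, by writing both surfaces as graphs over their common tangent plane and comparing their mean curvatures pointwise.

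First I would set up convenient local coordinates near $p$. Using exponential-type coordinates on $P$ centered at $p$ (for instance, Fermi coordinates along a neighborhood of $p$ in the common tangent plane $\Pi = T_p M_1 = T_p M_2$), I can arrange that a small neighborhood of $p$ in $P$ is identified with an open subset of $\mathbb{R}^3$, the common tangent plane $\Pi$ corresponds to $\{z=0\}$, the common normal $\nu(p)$ is $\partial_z$, and the ambient metric $g$ has the form $g_{ij}(x,y,0) = \delta_{ij}$ at the origin. In such coordinates, since $T_pM_i = \Pi$, each $M_i$ is locally the graph $z = u_i(x,y)$ of a smooth function $u_i$ defined on a disk $D \subset \Pi$, with $u_i(0) = 0$ and $\nabla u_i(0) = 0$. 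The orientation normal of each graph is $\nu_i = (-\nabla u_i, 1)/\sqrt{1+|\nabla u_i|_g^2}$ (with the appropriate metric norm), and by hypothesis these normals agree at $p$, so with respect to the same normal field the mean curvature of $M_i$ at a point $(x,y,u_i(x,y))$ is given by a quasilinear second-order elliptic expression
\begin{equation*}
\mathcal{H}(u_i)(x,y) \;=\; \sum_{\alpha,\beta} a^{\alpha\beta}(x,y,u_i,\nabla u_i)\, \partial_{\alpha\beta} u_i \;+\; b(x,y,u_i,\nabla u_i),
\end{equation*}
with $(a^{\alpha\beta})$ a smooth positive-definite matrix depending smoothly on its arguments (this is the standard mean-curvature-of-a-graph operator in a Riemannian ambient).

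Next, I would form the difference $w = u_1 - u_2$, which satisfies $w(0)=0$, $\nabla w(0)=0$, and $w \geq 0$ on $D$ by the hypothesis that $M_1$ lies above $M_2$. The hypothesis $H_1 \leq H_2$ near $p$ translates into
\begin{equation*}
\mathcal{H}(u_1)(x,y) \;\leq\; \mathcal{H}(u_2)(x,y) \qquad \text{on a neighborhood of } 0.
\end{equation*}
Applying the standard convexity/averaging trick, I write
\begin{equation*}
\mathcal{H}(u_1) - \mathcal{H}(u_2) \;=\; \int_0^1 \frac{d}{ds}\, \mathcal{H}\bigl(s u_1 + (1-s) u_2\bigr)\, ds,
\end{equation*}
which, after differentiating under the integral and using that $\mathcal{H}$ is $C^1$ in $(u,\nabla u, \nabla^2 u)$, produces a \emph{linear} second-order operator
\begin{equation*}
Lw \;=\; \sum_{\alpha,\beta} A^{\alpha\beta}(x,y)\, \partial_{\alpha\beta} w \;+\; \sum_{\alpha} B^{\alpha}(x,y)\, \partial_{\alpha} w \;+\; C(x,y)\, w,
\end{equation*}
with $A^{\alpha\beta}$ uniformly elliptic on a sufficiently small disk (since $(a^{\alpha\beta})$ is positive definite at the origin and varies continuously) and all coefficients bounded. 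The inequality $\mathcal{H}(u_1)\leq\mathcal{H}(u_2)$ reads $Lw \leq 0$, i.e., $-Lw \geq 0$, with $w \geq 0$ and $w(0)=0$.

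Finally, I would invoke the strong maximum principle of Hopf for linear uniformly elliptic operators (in the form that allows a zeroth-order coefficient of either sign, valid because $w$ attains its \emph{minimum} value $0$ at an interior point): this forces $w \equiv 0$ on a neighborhood of $0$, i.e., $u_1 \equiv u_2$, so $M_1$ and $M_2$ coincide near $p$. The main technical point to get right is verifying uniform ellipticity of $(A^{\alpha\beta})$ on a small enough neighborhood and confirming that the correct sign convention for $L$ allows application of the strong maximum principle at an interior minimum; both are routine once the mean-curvature-of-a-graph operator has been written out carefully in the ambient metric.
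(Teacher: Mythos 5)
Your proof is correct: the paper itself states this lemma without proof, citing Coskunuzer, and your argument (graphs over the common tangent plane, linearization of the quasilinear mean-curvature operator along the segment $su_1+(1-s)u_2$, then the Hopf strong maximum principle applied at the interior zero minimum of $w=u_1-u_2\geq 0$, using the refinement valid for a zeroth-order coefficient of arbitrary sign) is exactly the standard argument underlying that citation. The sign bookkeeping and the uniform ellipticity on a small disk are handled correctly, so there is nothing to fix.
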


Given $M_1$ and $M_2$  two  $1$-surfaces immersed   in a complete three-manifold $P_{\raisepunct{.}}$ A point $p\in M_1\cap M_2$ of   common tangency is said to be a kissing point if $M_1$ lies above $M_2$ but they do not coincide in a neighborhood of $p$, that is, the mean curvature vectors of $M_1$ and $M_2$ at $p$ point to opposite sides. Unlike the minimal case, an $1$-surface can have a tangential self-intersection at a kissing point $p$. Such a point is called a self-touching point. 
This means that the $1$-surface is immersed but do not cross itself.


In the following result we generalize \cite[Thm.1.5]{bessa-jorge-oliveira} and, although  we state it for dimension three, it holds for any dimension.

\begin{theorem} \label{Theobessa}
Let $P$ be a complete three-manifold with bounded geometry and Ricci curvature $Ric_{_P}\geq -2$. Let  $\varphi \colon M \to P$ be  a complete $1$-surface immersed in $P$ with Gaussian curvature bounded from below. Then, one of the following conditions holds.
\begin{itemize}
\item[(a)]$\varphi$ is proper;
\item[(b)] Every complete leaf $S\subset \mathcal{L}_{\varphi}$ whose intersection with $\varphi(M)$ is either empty or only admits kissing points is strongly stable. 
\end{itemize}
\end{theorem}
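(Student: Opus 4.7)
The plan is to argue by contradiction: assume $\varphi$ is not proper and then produce, on every qualifying complete leaf $S\subset\mathcal L_\varphi$, a positive smooth solution of the Jacobi equation $Lu=0$, since existence of such a positive solution is equivalent to strong stability by \cite{FC}. Non-properness gives a divergent sequence $p_k\in M$ with $\varphi(p_k)\to q$; since $H=1$ is constant and the Gauss equation together with the bounded Gaussian curvature and the lower Ricci bound control $|II|^2$, the immersion $\varphi$ has bounded second fundamental form, and the bounded geometry of $P$ then provides a uniform local graphical radius. Passing to a subsequence and using standard compactness for bounded-geometry sequences of $H$-surfaces yields, in a neighborhood of $q$, a local $1$-leaf of the lamination $\mathcal L_\varphi$ that extends to the complete leaf $S$ through $q$.

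Next I would work locally on a coordinate patch of $S$. Translating $\varphi(p_k)$ and shifting normally, each relevant sheet of $\varphi(M)$ passing close to $S$ can be written as a normal graph $\operatorname{graph}(u_k)$ over a disk $D\subset S$, where $u_k$ satisfies the nonlinear mean-curvature-$1$ equation over $S$. Because $S$ is a leaf of $\mathcal L_\varphi$ and because by hypothesis $S\cap\varphi(M)$ consists at worst of kissing points, the graphs do not cross $S$: up to choosing the orientation consistent with the common mean curvature vector, $u_k$ has a constant sign (say $u_k\ge 0$) on $D$, and in fact is strictly positive (otherwise a zero would be a non-kissing contact, contradicting Lemma \ref{maximum-principle} or the kissing hypothesis). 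Linearizing the mean-curvature-$1$ equation around $S$, the normalized functions $v_k := u_k / u_k(x_0)$ converge, after extracting a subsequence via Harnack on $S$, to a nonnegative solution of the Jacobi equation $Lv=0$, with $v(x_0)=1$; by the strong maximum principle $v>0$ on $D$.

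Then I would globalize: cover $S$ by overlapping disks $D_\alpha$ and, by a diagonal argument, arrange the normalizations so that the local positive Jacobi functions $v^{(\alpha)}$ match on overlaps, producing a global positive $u\in C^\infty(S)$ with $Lu=0$. This gives strong stability of $S$ by Fischer–Colbrie's characterization \cite{FC}, proving alternative (b).

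The main obstacle is the construction of the positive Jacobi field in two respects: first, ensuring that the limit $v$ is not identically zero (handled by normalizing at a fixed base point and invoking Harnack, which requires confirming that $u_k$ does not vanish on the disks used, a point where the kissing-point hypothesis is crucial); and second, globalizing from disk to disk without losing positivity or smoothness, which forces a careful choice of normalizing points and a diagonal extraction consistent across overlaps. The Ricci lower bound $\mathrm{Ric}_P\ge -2$ enters the argument only through the quadratic form $Q$ (so that strong stability of an $H=1$ surface is a nontrivial condition) and through the compactness/Harnack machinery, which is why the hypothesis cannot be dropped.
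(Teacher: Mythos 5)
Your overall strategy is a legitimate, genuinely different route from the one in the paper, and it leads to the same conclusion. You construct a globally defined positive Jacobi function on $S$ as a normalized limit of the graph heights $u_k$ of nearby sheets of $\varphi(M)$, and then invoke Fischer--Colbrie's characterization of strong stability. The paper, after the same preliminary construction of the limit leaf $S$ (your first paragraph mirrors \textbf{Claim 1} almost exactly, down to the use of the bounded-geometry/bounded-curvature hypotheses to produce uniformly controlled local graphs), instead works on an arbitrary compact $C\subset S$ and proves $\lambda_1^L(C)\geq 0$ directly by a Ros-type perturbation argument: assuming $\lambda_1^L(\widetilde C)<0$ on a slightly larger compact, it solves $Lu=1$ with Dirichlet data, builds the normal variation $\widetilde C(t)=\exp(tu\nu)$ so that $H'(0)=\tfrac12 Lu=\tfrac12>0$, and shows that if $u>0$ somewhere the variation would touch some $C_k\subset\varphi(M)$ and contradict Lemma~\ref{maximum-principle}; a comparison with the first Dirichlet eigenfunction of $C$ then forces $\lambda_1^L(C)\geq 0$. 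Since strong stability is a local statement, no gluing is needed in the paper's version. That is the real advantage of the paper's route: your approach concentrates the technical difficulty in the final globalization step (``arrange the normalizations so that the local positive Jacobi functions $v^{(\alpha)}$ match on overlaps''), where one must choose a single coherent family of sheets $u_k$ that become graphs over an exhaustion of $S$, normalize at one fixed basepoint, and run a diagonal extraction; for a non-compact limit leaf this requires showing that the same sheets eventually graph over arbitrarily large compacts, a point you flag but do not resolve. Two smaller points: your claim ``a zero would be a non-kissing contact'' is slightly imprecise---the relevant fact is that $C_k$ converges to $S$ from the side determined by $\overrightarrow{H}_S$, so a zero of $u_k$ would be a tangential contact with \emph{matching} orientations, forcing $C_k\subset S$ by Lemma~\ref{maximum-principle} and contradicting that $S\cap\varphi(M)$ consists only of kissing points; and, like the paper, you should say a word about possible self-touching points of the immersed leaf $S$ itself, which the paper handles explicitly.
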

\begin{proof}

Suppose $\varphi$ is a non-proper immersion and let $p\in \mathcal{L}_{\varphi}$. 
\vspace{0.2cm}

\noindent\textbf{Claim 1:}  There exists a sequence of  disks $\{D_k\}$ in $P$ converging uniformly to a disk $D\subset \mathcal{L}_{\varphi}$ containing $p$. 
Moreover, the disk $D$ can be extended to a complete $1$-surface $S\subset \mathcal{L}_{\varphi}$ passing through $p$ with bounded curvature and $H=1$.

For the sake of completeness we will briefly outline a proof for this claim. Arguing under the non-properness of $\varphi$ we can take a divergent sequence $x_k \in M$ such that $p_k = \varphi(x_k)$ converges to the point $p \in \mathcal{L}_{\varphi}$. 
 Since  $\varphi(M)$ is an $1$-surface with bounded curvature, and $P$ has bounded geometry there is a uniform bound on the second fundamental form of $\varphi(M)$. Therefore, there exists a family of disks $D_k(\delta) \subset T_{p_k} \varphi(M)$, centered at the origin and with uniform radius $\delta > 0$, such that $\varphi(M)$ is locally described as the graph of a function $u_k$ which enjoy a $C^1$ bound independently of $p_k \in \varphi(M)$. We pick a subsequence of $p_k$, still called $p_k$, such that $T_{p_k}\varphi(M)$ converges to a vector subspace $V \subset T_p P$, determined by a fixed unit normal vector field $\nu$, with the property that the sign between the mean curvature vector field $\overrightarrow{H}(x_k)$ and $\nu$ is fixed. For $k$ sufficiently large, the local graphs over $D_k(\delta)$ are also graphs on a small disk $D(\delta/2) \subset V$. The classical quasilinear PDE theory asserts that these graphs converges to a limit $1$-graph $S$ tangent to $V$ at $p$. Since all boundary points of $S$ are in $\mathcal{L}_{\varphi}$, reasoning as above, we can extend $S$ to a geodesically complete, oriented leaf contained in $\mathcal{L}_{\varphi}$ with bounded curvature, also denoted by $S$ (see also \cite{Ronaldo}). 

To prove assertion $(b)$ we argue along similar lines from \cite[Thm.1.5]{bessa-jorge-oliveira}. The argument is inspired by \cite[Thm.1]{R}. 
Let $S\subset \mathcal{L}_{\varphi}$ be the complete $1$-surface with bounded curvature passing through $p$ constructed in {\bf Claim 1}. Since the intersection $S\cap \varphi(M)$ is either empty or only admits kissing points, then $S$ has no transversal self-intersection. Moreover, at possible tangential self-intersection points the maximum principle (Lemma \ref{maximum-principle}) implies that the mean curvature vector field along $S$ must point in opposite directions, thus $S$ admits only self-touching points.

Let $C \subset S$ be a compact proper subset of $S$ and let $T^+_{\varepsilon}(C)$ be the oriented $\varepsilon$-tubular neighborhood of $C$ in $P$, with respect to the mean curvature vector field of $S$.  For some  $\varepsilon>0$, depending on the curvature bounds of $M$ and $P$, the  $\varepsilon$-tube  $T^+_{\varepsilon}(C)$ is embedded. Consider a sequence of compact subsets $C_k \subset \varphi(M)$ converging uniformly to $C$. From our assumption on $\varphi(M)\cap S$, even in the case that $C$ contains a kissing point between $\varphi(M)$ and $S$, or a self-touching point of $S$, we can guarantee that, up to a subsequence, $C_k$ converges to $C$ on one side of $C$, that is, inside $T^+_{\varepsilon}(C)$. Let us denote by $\nu$ be the continuous unit normal vector field along $S$ pointing towards $C_k$. By the construction of $C$, the mean curvature vector fields of $C_k$ point towards the same direction as the mean curvature vector field of $C$, since $C_k$ converges uniformly to $C$ by one side.

\vspace{2mm}

\noindent\textbf{Claim 2:}   $C$ is strongly stable.

To prove the claim, we take a compact $\widetilde C$ containing properly  $C$. If the first eigenvalue of the Jacobi operator $L=\Delta+|II|^2+Ric_{\!_P}(\nu,\nu)$ in $\widetilde C$ is non-negative,  then $\widetilde{C}$ is strongly stable and we are done. Therefore, we can assume that $\lambda_1^{L}(\widetilde{C})< 0$, and in this case, there exists a smooth function $u$ on $\widetilde C$ satisfying
\[
\left\lbrace
\begin{array}{rl}
Lu =1 &  \text { in}\ \ \widetilde C, \\[0.2cm]
u =0 & \text { on}\ \ \partial \widetilde C.
\end{array}\right.
\]

Consider the variation $\widetilde{C}(t)=\{\exp_x(tu(x) \nu) \colon x\in \widetilde C\}$ for $-\varepsilon<t<\varepsilon$, and denote by $H(t)$ its mean curvature function. The mean curvature $H(t)$ evolves, accordingly
to \cite[Thm.3.2]{HP}, as  
$$
 H'(0)=\frac{1}{2}Lu=\frac{1}{2}\,\,\mbox{ in } \widetilde C.
$$
Recalling that $H(0)=1,$ we have $H(t)>1$ for every $t\in(0,\epsilon')$, for some $0<\varepsilon' < \varepsilon$.  Suppose  that $u$ is positive at some  point of $\mbox{int}(\widetilde C)$,  then there is a  small enough $t$ such that $\widetilde C (t)$  have a tangency point with some $C_k$ which is not possible by the maximum principle (Lemma \ref{maximum-principle}). We may conclude that $u \leq 0$. Similarly, if $u\leq 0$ and attains its maximum at some interior point of $\widetilde C$, then we conclude that $u\equiv 0$ which is impossible since $L u = 1$. Therefore, $u < 0$ in $\mbox{int}(\widetilde C)$ and it vanishes on the boundary $\partial \widetilde C$.

Set $w = -u$ and consider $v$ to be a positive first eigenfunction of $C$, that is, $v$ is the solution of the problem 
\[
\left\lbrace
\begin{array}{rl}
Lv +\lambda_{1}^{L}(C) v = 0 & \text { in}\ \ C, \\
v = 0 & \text { on}\  \partial  C.
\end{array}\right.
\]
Define the function $h=w-\tau v$ in $C$. We can choose $\tau$ such that $h\geq 0$ and $h(p)=0$ for some $p\in\mbox{int}(C) $. Suppose by contradiction that $\lambda_{1}^{L}(C)<0$. Then,
$$
Lh=Lw-\tau Lv < \tau \lambda_{1}^{L}(C) v\leq 0 \  \ \mbox{in } C.
$$
Therefore $h$ is superharmonic on $C$ and has a minimum ($h(p)=0)$ at  $\mbox{int}(C)$. By the maximum principle $h$ is constant, a contradiction. Hence any $C\subset S$ is strongly stable and thus $S$ is strongly stable. 
\end{proof}

Keeping the  hypotheses of Theorem \ref{Theobessa} we have the following corollary.
\begin{corollary}\label{corbessa}
 If $S$ is a compact leaf of $\mathcal{L}_{\varphi}$, then $S$ is totally umbilical and  $Ric_{P}(\nu,\nu)=-2.$
\end{corollary}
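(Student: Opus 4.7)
The plan is to feed strong stability of $S$, granted by Theorem~\ref{Theobessa}(b), into the second variation formula with the only non-trivial test function available on a closed surface, namely the constant $\psi \equiv 1$, and then to extract rigidity from the borderline hypothesis ${\rm Ric}_{P}\geq -2$.

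First, since $S$ is compact without boundary, $\psi \equiv 1$ lies in $C^{\infty}_{0}(S)$ and is admissible in the stability inequality $Q(\psi,\psi)\geq 0$. Plugging it in and using $\nabla \psi = 0$ collapses the formula of Section~\ref{preli} to
\begin{equation*}
\int_{S}\bigl(|II|^{2} + {\rm Ric}_{P}(\nu,\nu)\bigr)\, d\sigma \leq 0.
\end{equation*}
Next, I estimate the integrand pointwise from below. With the convention $H=(k_{1}+k_{2})/2$ dictated by the functional $\mathcal{J}=A-2HV$, the fact that $S$ is a $1$-surface means $k_{1}+k_{2}=2$, and the elementary inequality $k_{1}^{2}+k_{2}^{2}\geq (k_{1}+k_{2})^{2}/2$ reads
\begin{equation*}
|II|^{2}\geq 2H^{2}=2,
\end{equation*}
with equality exactly when $k_{1}=k_{2}=1$, i.e.\ when $S$ is totally umbilical. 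Combined with the ambient lower bound ${\rm Ric}_{P}(\nu,\nu)\geq -2$, this gives $|II|^{2}+{\rm Ric}_{P}(\nu,\nu)\geq 0$ at every point of $S$.

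Finally, a non-negative integrand with non-positive integral must vanish identically, so $|II|^{2}\equiv 2$ and ${\rm Ric}_{P}(\nu,\nu)\equiv -2$ along $S$. The first identity is precisely the equality case above, hence $S$ is totally umbilical; the second is the claimed Ricci identity. I do not foresee any genuine obstacle: the proof is a standard rigidity extraction from the equality case of a strong-stability inequality, closely paralleling the mechanism of Proposition~\ref{MPR-a}, with the essential difference that compactness of $S$ supplies the constant function as test function and makes the borderline hypothesis ${\rm Ric}_{P}\geq -2$ saturate, rather than contradict, the stability bound.
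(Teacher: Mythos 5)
Your argument is correct. It differs mildly from the paper's proof: you plug the constant test function $\psi\equiv 1$ directly into the stability form $Q$ (admissible since $S$ is closed), obtaining $\int_S(|II|^2+\mathrm{Ric}_P(\nu,\nu))\,d\sigma\le 0$, whereas the paper instead invokes Fischer--Colbrie to produce a positive Jacobi function $u$ with $Lu=0$ and integrates that identity over $S$ to get $\int_S(|II|^2+\mathrm{Ric}_P(\nu,\nu))\,u\,d\sigma=0$. Both routes then conclude identically from the pointwise bound $|II|^2+\mathrm{Ric}_P(\nu,\nu)\ge 2H^2-2=0$. Your version is arguably the more elementary of the two and sidesteps a small subtlety in the paper: on a \emph{compact} surface, strong stability means $\lambda_1(L)\ge 0$, and a positive solution of $Lu=0$ exists only in the borderline case $\lambda_1(L)=0$; that $\lambda_1(L)$ cannot be strictly positive here is itself a consequence of the very computation you perform with $\psi\equiv1$. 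So you reach the conclusion directly, while the paper's appeal to the Fischer--Colbrie characterization implicitly relies on the same observation. Both arguments are sound and give the same rigidity: $|II|^2\equiv2$ (total umbilicity) and $\mathrm{Ric}_P(\nu,\nu)\equiv-2$.
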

\begin{proof}Since $S$ is strongly stable,
by a result of D. Fisher-Colbrie \cite{FC}, there exists a positive solution $u$ of the Jacobi operator $Lu =0$. Integrating over $S$, we then obtain 
\begin{eqnarray*}
\int_S (|II|^2+Ric_{P}(\nu,\nu))u=0.
\end{eqnarray*}
 Since $Ric_{\!_P}\geq -2$ and $|II|^2\geq 2H^2=2,$ we conclude that $S$ is totally umbilical and  $Ric_{P}(\nu,\nu)=-2.$
\end{proof}

 We are now going to prove Proposition \ref{MPR-a} stated in the Introduction.

\begin{proposition}\label{MPR}
There are no complete  strongly stable $H$-surfaces with $H\geq 1$ immersed in a three-manifold with Ricci curvature $Ric_{\!_P}> -2$.
\end{proposition}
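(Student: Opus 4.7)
The plan is to argue by contradiction, combining the Jacobi stability inequality with the Gauss equation and the classical Fischer--Colbrie--Schoen parabolicity result in dimension two.

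Suppose for contradiction that $M$ is a complete strongly stable $H$-surface with $H\geq 1$ in $P$. Strong stability yields the Jacobi inequality
\[
\int_M |\nabla \phi|^2 \, d\sigma \geq \int_M q\, \phi^2 \, d\sigma, \qquad \forall\, \phi \in C_0^\infty(M),
\]
with $q := |II|^2 + Ric_P(\nu,\nu)$. Decomposing $II = II_0 + H g_M$ into its traceless and trace parts gives $|II|^2 = |II_0|^2 + 2H^2 \geq 2$, and combined with $Ric_P(\nu,\nu) > -2$ this shows $q > 0$ at every point of $M$.

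The first key step is to upgrade this pointwise positivity to the sharper bound $q > -K_M$. Using the Gauss equation $K_M = K_{\mathrm{sec}}^P(T_pM) + k_1k_2$ with $k_1k_2 = H^2 - \tfrac12|II_0|^2$, together with the identity $K_{\mathrm{sec}}^P(T_pM) + Ric_P(\nu,\nu) = \tfrac12 \mathrm{Scal}_P$ valid in every oriented $3$-manifold, I compute
\[
q + K_M = 3H^2 + \tfrac12|II_0|^2 + \tfrac12\, \mathrm{Scal}_P.
\]
Since $Ric_P > -2$ as a symmetric tensor implies $\mathrm{Scal}_P > -6$, and $H\geq 1$, this forces $q+K_M>0$ pointwise on $M$. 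Inserting $q>-K_M$ into the stability inequality yields
\[
\int_M |\nabla \phi|^2 \, d\sigma + \int_M K_M\, \phi^2 \, d\sigma \geq 0, \qquad \forall\, \phi \in C_0^\infty(M),
\]
so the Schr\"odinger operator $-\Delta-K_M$ is non-negative on $M$. By the Fischer--Colbrie--Schoen theorem for complete oriented two-dimensional Riemannian manifolds, this forces $M$ to have parabolic conformal type; since parabolicity is a conformal invariant in dimension two, $M$ is parabolic as a Riemannian surface.

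The proof concludes by testing the stability inequality against a parabolic exhaustion. By parabolicity of $M$, there exists a sequence $\eta_R \in C_0^\infty(M)$ with $0\leq \eta_R\leq 1$, $\eta_R \nearrow 1$ pointwise on $M$, and $\int_M |\nabla \eta_R|^2 \, d\sigma \to 0$. Applying the stability inequality to $\eta_R$ and passing to the limit by monotone convergence gives the contradiction
\[
0 < \int_M q\, d\sigma = \lim_{R\to\infty}\int_M q\, \eta_R^2 \, d\sigma \leq \lim_{R\to\infty}\int_M |\nabla \eta_R|^2\, d\sigma = 0.
\]
The step I expect to require the most care is the passage from the non-negativity of $-\Delta-K_M$ to the parabolicity of $M$: this is a classical but non-elementary two-dimensional result resting on conformal uniformization and Huber-type theorems, and one must verify that it applies in the form needed here.
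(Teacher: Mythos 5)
Your proof is correct, up to a sign typo, and it takes a genuinely more self-contained route than the paper. The paper's proof of this proposition is very terse: it simply invokes item (4) of the proof of \cite[Thm.2.13]{MPR} to obtain the logarithmic-cutoff estimate $0<\int_M(|II|^2+\mathrm{Ric}_P(\nu,\nu))f^2\leq C/\log R$, then lets $R\to\infty$. You instead re-derive the underlying mechanism: the Gauss-equation identity $q+K_M=3H^2+\tfrac12|II_0|^2+\tfrac12\mathrm{Scal}_P$ (which is correct, and with $H\geq 1$ and $\mathrm{Scal}_P>-6$ gives $q+K_M>0$), pass to the Fischer--Colbrie--Schoen criterion to get parabolic conformal type, and then plug a general parabolic exhaustion into the stability inequality. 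The two arguments are the same in spirit -- the log-cutoff estimate in [MPR] is ultimately a manifestation of the parabolicity you establish -- but yours unfolds the black box, which is a clarity gain; the price is the appeal to the two-dimensional uniformization/Fischer--Colbrie--Schoen machinery, which you rightly flag as the subtle point.

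One correction: the displayed inequality $\int_M|\nabla\phi|^2\,d\sigma+\int_M K_M\phi^2\,d\sigma\geq 0$ expresses non-negativity of the quadratic form of $-\Delta+K_M$, not $-\Delta-K_M$ as you wrote; the Fischer--Colbrie--Schoen theorem is precisely about the operator $-\Delta+aK$ being non-negative for $a>1/4$ (here $a=1$), so the substance of your application is fine, but the stated operator has the wrong sign. Also, the compact case should be handled separately (test the stability inequality with $\phi\equiv 1$, as the paper does), since the Fischer--Colbrie--Schoen conformal-type statement is usually phrased for non-compact surfaces; your exhaustion argument degenerates harmlessly there, but it is worth making the case distinction explicit.
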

\begin{proof}
Suppose by contradiction that there exists a complete strongly stable $H$-surface $M$ with $H\geq 1$. We  assume that $M$ is non-compact, otherwise the constant function $1$  in the stability inequality will give a contradiction. Let $x_0\in M$ and $R>0$. It follows from the proof of item $4$ in \cite[Thm.2.13]{MPR} that we can find a constant $C>0$ such that
\begin{equation}\label{stabmpr}
0<\int_{M}(|II|^2+Ric_{P}(\nu,\nu))f^2\leq \frac{C}{\log R}\raisepunct{,}
\end{equation}
where $f(q)=\varphi(r)$ is a radial logarithmic cut-off function given by
\[
\varphi(r)=\left\{
\begin{array}{ccc}1 & \text { if } & 0 \leq r \leq 1, \\[0.2cm] 
\displaystyle 1-\frac{\log r}{\log R} & \text { if } & 1 \leq r \leq R, \\[0.3cm]
0 & \text { if } & R \leq r.\end{array}\right.
\]
Above $r(q)={\rm dist}(x_0,q)$ denotes the intrinsic distance from $q$ to $x_0$. The last right-hand side of \eqref{stabmpr} goes to $0$ as $R$ tends to infinity, while  the integrand is strictly positive. This leads to a contradiction.
\end{proof}

The following result is a straightforward consequence of Proposition \ref{MPR}.

\begin{corollary}
 Let $M$ and $N$ be two disjoint $1$-surfaces properly embedded  in a Riemannian three-manifold $P$ with Ricci curvature ${\rm Ric}_P > -2$. Then, $M$ and $N$ can not bound a mean convex component between them. 
\end{corollary}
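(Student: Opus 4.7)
The plan is to derive a contradiction with Proposition~\ref{MPR} by producing a complete, strongly stable $1$-surface inside the hypothesized mean convex component $\Omega$. Assume for contradiction that $M$ and $N$ bound such an $\Omega$, so that $\partial\Omega = M\cup N$ with both mean curvature vectors pointing into $\Omega$ and $H_M = H_N = 1$. Thus $\partial\Omega$ is a mean-convex barrier from either side and the two components have opposite inward normals.

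My approach is variational: consider the one-isoperimetric functional $\mathcal{J}(E) = \mathrm{Per}(E) - 2\,\mathrm{Vol}(E)$ on Caccioppoli sets $E\subset\overline{\Omega}$, whose critical points are constant-mean-curvature-$1$ hypersurfaces with the mean curvature vector pointing out of $E$. Working in an exhaustion by relatively compact subdomains $\Omega_k\Subset\Omega$ (with suitable volume constraints chosen to force non-trivial minimizers), standard BV compactness and regularity for almost minimizers of perimeter produce smooth reduced boundaries $\partial^* E_k^*\subset \overline{\Omega_k}$.

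The key step is to invoke Lemma~\ref{maximum-principle} to rule out tangencies between $\partial^*E_k^*$ and $\partial\Omega=M\cup N$. At such a tangency the outer normal of $E_k^*$ is opposite to the inward normal of $\partial\Omega$, so the mean curvatures with respect to a common normal take opposite signs ($+1$ against $-1$), which by Lemma~\ref{maximum-principle} is inconsistent with $\partial^*E_k^*$ lying locally on the $\Omega$-side of $\partial\Omega$, unless the two surfaces coincide locally; but this coincidence is impossible since $E_k^*\subset\Omega$. Hence $\partial^*E_k^*$ is a smooth $1$-surface strictly interior to $\Omega$ and, being an unconstrained minimizer of $\mathcal{J}$, strongly stable. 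Passing to the limit $k\to\infty$ via uniform local area and second fundamental form estimates for strongly stable $1$-surfaces (in the spirit of Theorem~\ref{Theobessa}) produces a complete, embedded, strongly stable $1$-surface $\Sigma\subset\overline{\Omega}$, which by Proposition~\ref{MPR} cannot exist.

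The main obstacle is guaranteeing the non-triviality of the minimizer, since the naive infimum of $\mathcal{J}$ can be attained by the empty set (as in the borderline warped-product example after Theorem~\ref{intersect}, where $\mathrm{Ric}_P=-2$ and the parallel flow of $M$ reaches $N$ smoothly). The strict hypothesis $\mathrm{Ric}_P>-2$ must enter at this point: along a parallel surface of $M$ the mean curvature obeys the Riccati inequality $H'(t)\geq H(t)^2 + \tfrac12\mathrm{Ric}_P(\nu_t,\nu_t)$, so $H(0)=1$ together with $\mathrm{Ric}_P>-2$ yields $H'(0)>0$ and hence $H(t)>1$ strictly for $t>0$, forcing the parallel flow to develop focal singularities in finite time; this strict monotonicity separates the variational problem from its borderline behaviour and allows one to build competitors with strictly negative $\mathcal{J}$-value inside the exhausting subdomains, securing non-trivial minimizers.
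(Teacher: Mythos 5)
Your overall strategy is exactly the one the paper takes: the paper's entire proof is a citation of the proof of \cite[Thm.3]{RR}, which is precisely the variational construction (minimize $\mathrm{Per} - 2\,\mathrm{Vol}$ in the mean-convex slab, regularity, stability of the minimizer) that you reconstruct, followed by the contradiction with Proposition~\ref{MPR}. So the route matches; the issue is in the details of how you secure a non-trivial minimizer.

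Your last paragraph misidentifies where $\mathrm{Ric}_P>-2$ enters, and the mechanism you propose does not work. You claim the strict Ricci bound is needed to produce competitors with $\mathcal{J}<0$ and hence to rule out the empty minimizer, via the Riccati inequality and focal singularities of the parallel flow from $M$. But the Riccati comparison gives $H(t)>1$ along that flow, so along the parallel region $E_t$ one has $\mathcal{J}'(t) = \int_{M_t}\bigl(2H(t)-2\bigr)\,dA > 0$: the functional \emph{increases}, which is the wrong direction for your purpose. In fact, non-triviality should not be sought through the Ricci hypothesis at all. The correct device (used in the paper's Lemma~\ref{horosphere_lim_set}, and in \cite{RR,M}) is to anchor the minimization by a fixed-boundary constraint, i.e. minimize among sets $Q$ whose boundary contains a fixed piece of a small tubular neighbourhood of $M$; then the empty set is not a competitor, and the barrier property of $N$ (and of $M$, via the tube) confines the free boundary strictly inside $\Omega$. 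With that constraint the construction produces a complete strongly stable $1$-surface even in the borderline case $\mathrm{Ric}_P=-2$ — as the slices $\mathcal{N}_t$ in the warped-product example illustrate — so the variational step is entirely insensitive to the strict inequality. The hypothesis $\mathrm{Ric}_P>-2$ is used only once, at the very end, when Proposition~\ref{MPR} is invoked to deny the existence of the strongly stable limit surface.
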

\begin{proof}
If  $\Omega$ is a mean convex component whose boundary are $M$ and $N$, then by the proof of \cite[Thm.3]{RR} there is a strongly stable $1$-surface in $\Omega$, which is a contradiction by Proposition \ref{MPR}.  
\end{proof}

\section{Proof of Theorems \ref{intersect}, \ref{teo2} and \ref{splitting}}

In this section we are going to present the proofs of the results that are consequence of Theorem \ref{Theobessa} and Proposition \ref{MPR}. Although it is an  unnatural ordering,  for simplicity  as it will be clear afterwards, we will leave the proof of Theorem \ref{intersect} to the last part of this section.

\subsection{Proof of Theorem \ref{teo2}}

To prove item $a)$ we assume by contradiction that $\varphi\colon M\to P$ is a complete non-compact $1$-surface injectively immersed with bounded Gaussian curvature in $P$. First, we observe that  since  $M$ is non-compact and $P$ is compact $\mathcal{L}_{\varphi}\neq \emptyset$. Moreover,  there exists $\epsilon>0$ depending on the second fundamental form of $M$ and on the bounds of the geometry of $P$ such that $\varphi^{-1}(B_\epsilon^{P}(q))$ is a countable union of disjoint disks $D_i(\delta)\subset M$, with $\delta=\delta(\epsilon)>0$ for every $q\in \mathcal{L}_{\varphi}$, see \cite[Lem.1--3]{jorge-xavier5} and their proofs.

We claim that $\varphi(M)$ can not lie in $\mathcal{L}_\varphi$, otherwise we can pick a point $q\in \varphi(M) \subset \mathcal{L}_\varphi$. As in {\bf Claim 1} of the proof of Theorem \ref{Theobessa} there is a family of disjoint disks $D_j\subset \varphi(M)$ converging to a disk $q\in D_{\infty}\subset \mathcal{L}_\varphi$. Take  a minimizing geodesic $\gamma \colon [0, \epsilon)\to P$ with $\gamma(0)=q$ and $\gamma'(0)\perp T_qD_{\infty}$ pointing towards $D_j$. Since each point $q_j\in \gamma([0, \epsilon))\cap D_j\neq \emptyset$ belongs to $\mathcal{L}_\varphi$, it must be an accumulation point of $ \Gamma=\gamma([0, \epsilon))\cap \varphi (M)$.  Therefore $\Gamma$ is   a perfect set and $\varphi^{-1}(B_\epsilon^{P}(q))$ is uncountable, contradiction.

Now, let $S \subset \mathcal{L}_{\varphi}$ be a complete leaf passing through a point $p \in P$. 
If $S$ would intersect $\varphi(M)$ transversally, then $\varphi $ would be not injectively immersed. Thus $S$ might intersect $\varphi (M)$ tangentially. Furthermore, such a intersection point must be a kissing point, for if the mean curvature vector fields of $S$ and $\varphi(M)$ coincide and by maximum principle $\varphi(M)= S \subset  \mathcal{L}_{\varphi}$, which gives a contradiction.

In any case we can conclude that $S$ might intersect $\varphi(M)$ only at kissing points, and by Theorem \ref{Theobessa} it must be strongly stable. This fact contradicts Proposition \ref{MPR}.

For the proof of item $b)$ we let $P$ be a non-compact manifold and assume by contradiction that $M$ is non-proper, that is $\mathcal{L}_{\varphi}\neq\emptyset$. Arguing as above we obtain a complete leaf $S \subset \mathcal{L}_{\varphi}$ whose intersection with $\varphi(M)$ is either empty or only contains kissing points. Again, $S$ is strongly stable by Theorem \ref{Theobessa} and this leads to a contradiction with Proposition \ref{MPR}.

\subsection{Proof of Theorem \ref{splitting}}

Let $\gamma:[0,l] \rightarrow P$ be a minimizing geodesic realizing the distance between $M$ and $N$  with initial data $x_{1}=\gamma(0) \in N$ and $x_{2}=\gamma(l) \in M$. By the first variation formula of arc-length the geodesic $\gamma$ must intersect $M$ and $N$ orthogonally. We define the normal exponential map $\Phi:[0, l] \times U_{1}\to P$ by $\Phi(t, x)=\exp _{x} (t\nu)$, where $U_{1}$ is a neighborhood of $N$ containing $x_1$ and $\nu$ is the normal vector field coinciding with $\gamma^{\prime}(0)$.

Since $M$ is an $1$-surface and $\gamma(t)=\Phi\left(t, x_{1}\right)$ realizes the distance between $M$ and $N$, the Jacobian of $\Phi$ at $\left(l, x_{1}\right)$ is non-singular. Then, up to shrinking $U_{1}$, we can produce a foliation in a tubular neighborhood of $N$ given by regularly embedded surfaces $V_{t}=\Phi\left(t, U_{1}\right)$. It is now standard that  the unit tangent vector field to the normal geodesic $\nu = \Phi(\partial_t)$ is parallel and  each $V_t$ is equidistant to $U_{1}.$ Recalling that $M$ lies in a mean convex component of $P\backslash N$, the mean curvatures $ H(t)$ of these surfaces satisfy (see Lemma \ref{gray_lemmata})
\begin{eqnarray*}
 2H'(t)&=& Ric_P(\nu,\nu)+|II(t)|^{2},
\end{eqnarray*}
where $II(t)$ is the second fundamental form of $V_{t}$. By Newton's inequality the function $H(t)$ satisfies $H'(t) \geq  H^2(t) - 1$ with $H(0) = 1$. Using the Riccati's comparison theorem we conclude that $H(t) \geq 1$ and thus $H'(t) \geq 0$.


The surface $V_{l}=\Phi(l,U)$ must be tangent to a neighborhood $U_2 \subset M$ containing $x_{2}$. Since $d(U_{1}, V_{l})=l$, we have that  $V_{l}$  stays below $U_{2}$ and  by Lemma  \ref{maximum-principle} they must coincide near $x_2$. This means that $H(t) \equiv 1$. Now, if another piece of $M$ touches $U_{2}$, then they must coincide provided $U_{2}\subset V_{l}$. Observing that the distance from $M$ to $N$ is also realized at the boundary points of $V_l$, we can apply the precedent argument to continue $V_l$ parallel to $N$, showing that $M$ and $N$ are equidistant everywhere, unless they intersect and, in this case, they are equal. Since the foliation $V_t$ is given by $1$-surfaces we have
$$
Ric_P(\nu,\nu)=-2\quad \mbox{and}\quad|II(t)|^{2}=2.
$$
Therefore, $M$ and $N$ are embedded and totally umbilical.

To prove item $b)$ we consider $\Omega$ be the connected component of $P$ whose boundary contains $M$ and $N$. Then, $\Omega$ is foliated by umbilical surfaces. The induced metric on each leaf  evolves as $ g'(t)=2 g(t)$ (see \cite{HP}). Therefore, the metric induced by $\Phi(t, x)$ on $[0,l] \times M$ is given by $d t^{2}+e^{2 t} g$, where $g$ is the metric of $N$. 

Finally, suppose by contradiction that $N$ is compact, but not separating. Then we argue along similar lines from the proof of  \cite[Thm.2.5]{CF} in order to  construct a cyclic cover $\hat P$ of $P$. Since $N$ is two-sided, we can define a smooth function on $P\backslash N$ which is equal to $0$ on $N$ and in a neighborhood of one side of $N$, and  equal to $1$ in a neighborhood of the other side of $N$. By  passing to the quotient $\bmod\; \mathbb{Z}$ we obtain a non-constant smooth function
$$
f: P \rightarrow \mathbb{R} / \mathbb{Z}=\mathbb{S}^{1}.
$$

Let $\tilde{P}$ be the universal cover  of $P$ and $f_{*} \colon \pi_1(P) \to \mathbb{Z}$ be the induced map  on the fundamental groups. Then $\hat{P}=\tilde{P} / \mbox{ker} f_{*}$ is a cyclic cover  of $P$ and  the preimage of $N$ under the projection $\pi: \hat{P} \rightarrow P$ divides $\hat{P}$ into two infinite parts.  Choosing the component with adequate normal direction, say $\Omega$, it follows from \cite[Lemma 1]{schoen1982lower} that $(n-1)\operatorname{Vol}(\Omega) \leq \operatorname{Vol}(\partial \Omega)<\infty,$ which is a contradiction. Thus $N$ is separating. The last assertion follows from the splitting theorem \cite[Thm.2]{croke1992warped}.

\subsection{Proof of Theorem \ref{intersect}}


Let $M$, $N$ be complete $1$-surfaces immersed in a compact three-manifold $P$ with Ricci curvature ${\rm Ric}_{_P}> -2$. Recall that $N$ is proper and $M$ has bounded curvature. Suppose by contradiction that $M$ lies in a mean convex component $\Omega$ of $P\backslash N$. Given a point $p \in \overline{M}$ let $S \subset \mathcal{L}_\varphi$ be the complete $1$-leaf immersed in $P$ with bounded curvature and passing through $p$, given by {\bf Claim 1} in the proof of Theorem \ref{Theobessa}.


\vspace{0.2cm}
\noindent \textbf{Case 1:}  $ \overline{M} \cap N\neq \emptyset$.
\vspace{0.2cm}

Let $p \in \overline{M} \cap N$ and $S \subset \mathcal{L}_\varphi$ be the leaf passing through $p$. Recalling that $S$ is obtained by uniform limit of disks with radius uniformly bounded from below, we may assume that $S$ intersects $N$ tangentially, otherwise there were some disk of $M$ intersecting $N$ transversally, contradicting $M\cap N = \emptyset$. Moreover, by the maximum principle (Lemma \ref{maximum-principle}) $S$ must coincide with $N$ in a neighborhood $U$ of $p$. Thus, repeating the above argument with points of $\partial U$ we can show that $S \subset N$. 

We have proved that $S$ is an $1$-surface, which is a leaf from $\mathcal{L}_\varphi$ satisfying $S\cap \varphi(M) = \emptyset$. Then, by item $(b)$ of Theorem \ref{Theobessa} it must be strongly stable. However, this contradicts Proposition \ref{MPR}.

\vspace{0.2cm}
\noindent\textbf{Case 2:}  $\overline{M}\cap N= \emptyset.$
\vspace{0.2cm}

Since $N$ is compact there exist points $p \in \overline{M}$ and $q \in N$ such that $d(\overline{M}, N)= d(p,q)= l$, for some $l > 0$. We take $S \subset \mathcal{L}_\varphi$ be the $1$-surface immersed with bounded curvature and passing through $p$.  
In this case, there exists a minimizing geodesic $\sigma \colon [0,l] \rightarrow P$ realizing the distance between $N$ and $S$ such that $\sigma(0) = q \in N$ and $\sigma(l) = p \in S$. Since $S$ lies in a mean convex component of $P\backslash N$ by item $a)$ of Theorem \ref{splitting}, $S$ and $N$ are embedded and totally umbilical parallel $1$-surfaces. However, we know that the mean curvatures $H(t)$ of the parallel surfaces of $N$, in the direction of its mean curvature vector field, satisfy (see Lemma \ref{gray_lemmata})
\begin{eqnarray*}
 2H'(t)&=& Ric_P(\nu,\nu)+|II(t)|^{2} \\[0.2cm]
 &>& -2 + 2H^2(t) > 0,
\end{eqnarray*}
where in the last inequality we have used that $H(t) \geq 1$. Therefore, any neighborhood $U \subset N$ of $q$ evolving paralleling along $\gamma$ must intersect $S$ tangentially at $p$ with mean curvature $H(l) > 1$. Once again, we apply the maximum principle to get a contradiction.

\section{Proof of Theorem \ref{half_bc}}

The following lemma is the core of the proof of Theorem \ref{half_bc} and it is the correspondent of \cite[Thm.1.2]{bessa-jorge-oliveira}.

\begin{lemma}\label{horosphere_lim_set}
Let $\Omega\subset\mathbb{H}^3$ be an open domain whose boundary is a union of pieces of regular $1$-surfaces with respect to normal vector field pointing 
towards the interior of $\Omega$. Assume $\varphi \colon M \to \Omega \subset \mathbb{H}^3$ is a complete $1$-surface immersed with bounded curvature.
Then there is a horosphere $\mathcal H$ separating $\overline{M}$ from $\partial \Omega$, unless $\partial \Omega$ is a horosphere contained in the limit set $\mathcal{L}_\varphi$. 
\end{lemma}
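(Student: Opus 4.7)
The approach is a sweep-out argument by parallel horospheres, following the template of the Euclidean strong half-space theorem of Bessa, Jorge and Oliveira with horospheres playing the role of the parallel planes used there. The analytic inputs are the Hopf-type maximum principle for $H=1$ surfaces (Lemma \ref{maximum-principle}) together with the limit-set analysis of Theorem \ref{Theobessa}.

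Suppose for contradiction that no horosphere separates $\overline{M}$ from $\partial\Omega$. Fix an asymptotic point $p_\infty \in \partial_\infty \mathbb{H}^3$, chosen (for example from the asymptotic boundary of $\partial\Omega$) so that the one-parameter family $\{\mathcal{H}_t\}_{t\in\mathbb{R}}$ of horospheres based at $p_\infty$ --- bounding open horoballs $B_t$ nested by inclusion --- can be oriented with mean curvature vector pointing toward $p_\infty$, in a way that matches the mean curvature vector of $\partial\Omega$ (which points into $\Omega$). Define $t^*_M := \sup\{t : \overline{M} \cap B_t \neq \emptyset\}$ and $t^*_\Omega := \sup\{t : \partial\Omega \cap B_t \neq \emptyset\}$. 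If $t^*_\Omega < t^*_M$, any horosphere $\mathcal{H}_t$ with parameter strictly between the two suprema is the desired separating horosphere, contradicting the assumption; so one must be in the degenerate configuration $t^*_\Omega \geq t^*_M$, and at the critical value the horosphere $\mathcal{H}_{t^*}$ has a point of common tangency either with $\partial\Omega$ or with $\overline{M}$.

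In the first tangency case, both $\mathcal{H}_{t^*}$ and $\partial\Omega$ have $H=1$ with matching oriented normals at the tangency point, so Lemma \ref{maximum-principle} combined with analytic continuation for the constant mean curvature equation forces $\partial\Omega$ to coincide with $\mathcal{H}_{t^*}$ on the connected component through the tangency; hence $\partial\Omega$ is a horosphere. The failure of separation then forces $\overline{M}$ to accumulate on $\mathcal{H}_{t^*}$, producing a limit point $p \in \mathcal{L}_\varphi \cap \mathcal{H}_{t^*}$. Theorem \ref{Theobessa} supplies a complete $H=1$ leaf $S \subset \mathcal{L}_\varphi$ of bounded curvature through $p$, tangent to $\mathcal{H}_{t^*}$; Lemma \ref{maximum-principle} once more forces $S = \mathcal{H}_{t^*}$, and since $\partial\Omega = \mathcal{H}_{t^*}$, we conclude $\partial\Omega \subset \mathcal{L}_\varphi$, as required. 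In the second tangency case (tangent only to $\overline{M}$), the same invocation of Theorem \ref{Theobessa} produces a complete leaf $S$ forced to coincide with $\mathcal{H}_{t^*}$, so $\mathcal{H}_{t^*}\subset \mathcal{L}_\varphi$; the non-separation assumption then places $\partial\Omega$ tangent to this limit horosphere from below, and the maximum principle argument again identifies $\partial\Omega$ with $\mathcal{H}_{t^*}$.

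The main obstacle is the geometric choice of $p_\infty$: when $\partial\Omega$ has an intricate asymptotic structure, one must ensure that the sweep-out by $\{\mathcal{H}_t\}$ actually produces a well-defined critical value at which a common tangency occurs; this requires a careful use of the hypothesis that $\partial\Omega$ is a regular $1$-surface with mean curvature vector pointing into $\Omega$. A secondary technical point is the analytic-continuation step that promotes a local horosphere-coincidence on $\partial\Omega$ to a global identification, which relies on connectedness of the relevant component of $\partial\Omega$ and unique continuation for the $H=1$ equation.
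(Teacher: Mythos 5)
Your proposal takes a sweep-out approach (moving parallel horospheres based at a fixed point $p_\infty$ until tangency) rather than the paper's approach, which is a variational one. The paper splits into two cases: if $\mathcal{L}_\varphi\cap\partial\Omega\neq\emptyset$, it applies Theorem \ref{Theobessa} plus the maximum principle to get a strongly stable complete leaf contained in $\partial\Omega$, forced to be a horosphere by the classification in \cite[Thm.2.13]{MPR}; if $\mathcal{L}_\varphi\cap\partial\Omega=\emptyset$, it constructs the separating horosphere by minimizing the functional $F(Q)=A(\partial Q)-2V(Q)$ on a class of domains trapped between a tubular neighborhood $T_{r}(\overline M)$ and $\partial\Omega$ inside large balls $B_{R_k}$, producing strongly stable $1$-surfaces $S_k$ and passing to a complete strongly stable limit that must again be a horosphere.

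There is a genuine gap in your argument, and it is not merely the ``obstacle'' you flag at the end. The sweep-out by horospheres cannot by itself produce the critical tangency on which the rest of your proof hinges. First, the suprema $t^*_M$, $t^*_\Omega$ need not be attained: $\overline M$ or $\partial\Omega$ can approach the critical horosphere $\mathcal H_{t^*}$ asymptotically without ever touching it, leaving you with no tangency point to which the maximum principle can be applied. This is exactly the phenomenon that makes \emph{strong} half-space theorems hard; in $\mathbb{R}^3$ for minimal surfaces the same difficulty forces Hoffman--Meeks to introduce the catenoid barrier, and forces Bessa--Jorge--Oliveira (and the present paper, in the hyperbolic setting) to switch to a minimization argument. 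Second, the inequality $t^*_\Omega < t^*_M$ does not on its own yield a separating horosphere: being contained in a horoball based at $p_\infty$ is only a sufficient, and quite special, way of being separated, and neither $\overline M$ nor $\partial\Omega$ has any a priori reason to be contained in such a horoball. Since you pick a \emph{single} ideal point $p_\infty$ and sweep only horospheres based there, the dichotomy ``either a separating horosphere in the family or a tangency'' simply does not hold. Third, in the second tangency case your step ``the non-separation assumption then places $\partial\Omega$ tangent to this limit horosphere from below'' does not follow: non-separation by any horosphere is a global statement involving all ideal points and does not imply any actual contact between $\partial\Omega$ and $\mathcal H_{t^*}$.

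What does work, and what the paper actually does in the main case $\mathcal{L}_\varphi\cap\partial\Omega=\emptyset$, is to manufacture a candidate separating surface directly, as a minimizer of $F(Q)=A(\partial Q)-2V(Q)$ over domains $Q$ squeezed between $T_{r}(\overline M)$ and a neighborhood of $\partial\Omega$, using the mean-convexity of both barriers to keep the minimizer in the interior. Regularity theory gives a smooth $H=1$ surface $S_R$, \cite[Prop.2.3]{BC} gives strong stability, and a diagonal limit as $R\to\infty$ (with local area and curvature bounds) gives a complete strongly stable $1$-surface between $\overline M$ and $\partial\Omega$. At that point \cite[Thm.2.13]{MPR} identifies $S$ as a horosphere, and it is automatically the separating one. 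The tangency-detection step you need is thus replaced by a compactness/regularity argument for minimizers; the sweep by a single family of horospheres cannot serve as a substitute for it.
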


\begin{proof}
We first consider the case where $\mathcal{L}_\varphi \cap \partial \Omega \neq \emptyset$. It then follows from Theorem \ref{Theobessa} and the maximum principle that $\partial \Omega$ is a leaf from $\mathcal{L}_\varphi$ which is a complete strongly  stable $1$-surface. However, in this case $\partial \Omega$ is a horosphere by \cite[Thm.2.13]{MPR}. 
  
Let us assume that $\mathcal{L}_\varphi \cap \partial \Omega = \emptyset$. Following ideas from \cite{bessa-jorge-oliveira} we consider an open ball $B_R$ centered at some point of $\mathbb{H}^3$ and radius $R>0$ which intersects both $\partial \Omega$ and $\overline{M}$. Since $\overline M$ lies in $\Omega$ and has bounded curvature, there exits $0<r< (\sup \sqrt{K_M})^{-1}$ (depending on $R$) such that the tubular neighborhood $T_r(\overline{M})$ has Lipschitz boundary (see \cite{RZ}), $T_r(\overline{M})\cap \partial\Omega = \emptyset$ and the outside tangent cone of $\partial T_r(\overline{M})$ has no angle bigger than $\pi$ (cf. \cite[Lemm.2.1]{bessa-jorge-oliveira}). Let $D_R$ be the connected open region of $(\Omega \backslash T_{r/2}(\overline{M}))\cap B_R $ whose boundary contains $\partial \Omega \cap B_R$. 
Precisely, $\partial D_R$ is composed by smooth pieces $\partial B_R \cap \Omega$, and by piecewise $1$-surfaces from $\partial \Omega \cap B_R$ and $\partial T_{r/2}(\overline{M})\cap B_R$.

Let us denote by $\partial T_{r/2}$ be the part of $\partial D_R$ contained in $\partial T_{r/2}(\overline{M})$. Let $\mathcal{F}$ be the class of open domains $Q \subset D_R$ with rectifiable boundary such that $\partial T_{r/2} \subset \partial Q$. We define on $\mathcal{F}$ the functional 
\begin{equation*}
F(Q) = A(\partial Q) - 2V(Q),
\end{equation*}
where $A(\partial Q)$ gives the area of the boundary of $Q$, and $V(Q)$ gives the volume of $Q$. The idea now is to minimize the functional $F$ and work along the strategy inspired in \cite{hauswirth_roitman_rosenberg,ros2010properly} and implemented in \cite[Sec.5]{M}. For the sake of completeness we will write down a sketch of the argument.

Since $\partial \Omega$ is a piecewise regular $1$-surface whose mean curvature vector field points inward $\Omega$, for $\mu>0$ (depending on $R$) sufficiently small, the subset $\Omega_\mu = \{ x \in \Omega \colon d(x,\partial \Omega) \leq \mu \}$ does not intersect $\partial T_r$, and it is foliated by piecewise smooth surfaces whose mean curvature is greater than $1$ where it is defined. Further, the curvatures bounds of $M$ imply that $ T_{r}(\overline{M})$ is also foliated by piecewise smooth surfaces, but in this case the mean curvature evolution along these smooth pieces depends on the direction of the mean curvature vector field of the corresponding limit disks given in Theorem \ref{Theobessa}.

\vspace{0.1cm}
\noindent{\bf Claim:} Let $Q \in \mathcal{F}$.
\begin{itemize}
\item[i)] If $Q \cap \Omega_{\mu/2}\neq \emptyset$, then there is $\mu' \in (\mu/2,\mu]$ such that  $Q\backslash \Omega_{\mu'} \in \mathcal{F}$ and $F(Q\backslash \Omega_{\mu'}) \leq F(Q)$.
\vspace{0.1cm}
\item[ii)] If  $T_{r}(\overline{M}) \nsubseteq Q $, then there is $r' \in (r/2,r]$ and $Q' \in \mathcal{F}$ such that  $T_{r'}(\overline{M}) \subset Q'$ and $F(Q') \leq F(Q)$.
\end{itemize}


\noindent To prove item $i)$ we denote by $\xi$ the inward unit normal vector field induced by the foliation of $\Omega_\mu$. It is easy to see that $\dive \xi \leq -2$ a.e. in $\Omega_\mu$. Using that $\partial Q$ has finite two dimensional Hausdorff measure, we can apply the coarea formula to find $\mu' \in (\mu/2,\mu]$ such that the one dimensional Hausdorff measure of $\partial Q\cap \partial \Omega_{\mu'}$ is finite. Hence, it will be a negligible subset in our computations. Then, the subset $Q\cap \Omega_{\mu'} \neq \emptyset$ has rectifiable boundary and the Stokes formula (see e.g. \cite[Sec.2.2]{M}) gives
\begin{eqnarray*}
-2V(Q\cap \Omega_{\mu'}) &\geq & \int_{Q\cap \Omega_{\mu'}} \dive \xi\\[0.2cm]
&=& \int_{\partial Q\cap \Omega_{\mu'}} \langle \xi, \eta(Q\cap \Omega_{\mu'})\rangle + \int_{\partial\Omega_{\mu'}\cap Q} \langle \xi, \eta(Q\cap \Omega_{\mu'})\rangle,
\end{eqnarray*}
where $\eta(Q\cap \Omega_{\mu'})$ denotes the outward unit normal vector field along the boundary of $Q\cap \Omega_{\mu'}$. Since $\xi = \eta(Q\cap \Omega_{\mu'})$ on the subset $\partial\Omega_{\mu'}\cap Q$, by the Cauchy-Schwarz inequality we have
\begin{eqnarray*}
-A(\partial Q\cap \Omega_{\mu'}) + A(Q\cap \partial\Omega_{\mu'}) + 2V(Q\cap \Omega_{\mu'}) \leq 0.
\end{eqnarray*}
This implies that 
\begin{eqnarray*}
F(Q\backslash \Omega_{\mu'}) &=& A(\partial (Q\backslash \Omega_{\mu'})) - 2V(Q\backslash \Omega_{\mu'}) \\[0.2cm]
&=& F(Q) - A(\partial Q \cap \Omega_{\mu'}) + A(Q\cap \partial \Omega_{\mu'}) + 2 V(Q\cap \Omega_{\mu'})\\[0.2cm]
&\leq & F(Q).
\end{eqnarray*}
To prove $ii)$, we first observe that  $\partial Q \cap \partial T_{r'}$ has one dimensional Hausdorff measure zero, for some $r' \in (r/2,r]$, as in the proof of item $i)$. Let $\xi$ be the outward unit vector field along the parallel surfaces foliating $T_{r'}(\overline{M})$. Notice that, independently of the mean curvature sign with respect to $\xi$, we must have $\dive \xi \leq 2$ a.e. in $T_{r'}(\overline{M})$. Set $\Gamma = T_{r'}(\overline{M})\backslash Q$ and compute
\begin{eqnarray*}
2V(\Gamma) &\geq & \int_{\Gamma} \dive \xi\\[0.2cm]
&=& \int_{\partial Q\cap T_{r'}(\overline{M})} \langle \xi, \eta(\Gamma)\rangle + \int_{\partial T_{r'}\backslash Q} \langle \xi, \eta(\Gamma)\rangle \\[0.2cm]
&\geq & - A(\partial Q\cap T_{r'}(\overline{M})) + A(\partial T_{r'}\backslash Q),
\end{eqnarray*}
where in the last inequality we have used that $\xi = \eta(\Gamma)$ along $\partial T_{r'}\backslash Q$. Thus, defining $Q' = Q \cup \Gamma$ it is easy to see that $Q' \in \mathcal{F}$ and
\begin{eqnarray*}
F(Q') &=& A(\partial Q) - A(\partial Q\cap T_{r'}(\overline{M})) + A(\partial T_{r'}\backslash Q) - 2V(Q) - 2V(\Gamma) \\[0.2cm]
&\leq & F(Q). 
\end{eqnarray*}

Once the above claim is proved, we can consider a minimizing sequence of subsets $Q_j \in \mathcal{F}$ satisfying $Q_j \cap \Omega_{\mu/2}=\emptyset$ and $T_{r'}(\overline{M}) \subset Q_j$. Thus, via the compactness theorem for integral currents (cf. \cite[Thm.5.5]{Morgan}) it is guaranteed the existence of a cluster point $Q_\infty$ as the limit of $Q_j$ in the flat topology, which still belongs to $\mathcal{F}$ and satisfies $Q_\infty \cap \overline{\Omega}_{\mu/2}=\emptyset$ and $T_{r'}(\overline{M}) \subset Q_\infty$. Since the area functional $A(\partial Q)$ is lower semi-continuous for the flat convergence and by the regularity properties of the volume differential form we can see that $Q_\infty$ minimizes $F$. Therefore, the piece of $\partial Q_\infty$ contained in the interior of $D_R$, here denoted by $S_R$, must be a local isoperimetric surface which by regularity theory (see \cite[Cor.3.6]{Morgan16}) is a smooth $1$-surface with mean curvature vector field pointing inward $Q_\infty$. Moreover, $S_R$ is strongly stable by \cite[Prop.2.3]{BC}, and the boundary of $S_R$ lies in the boundary of $B_R$.

For each positive integer $k$, let $R_k$ be a divergence sequence of radius, and let $S_k$ be the corresponding strongly stable $1$-surfaces constructed as above, that is, $S_k$ lies in the domain $D_k = D_{R_k}\backslash \overline{\Omega}_{\mu_{k}/2}$ and its boundary $\partial S_k$ belongs to $\partial B_{R_k}$. If we pick points $p \in \overline{M}$ and $q \in \partial \Omega$ such that the interior of the geodesic segment $[p,q]$ does not meet $\overline{M}$, then by construction, for $k$ sufficiently large, all the surfaces $S_k$ will intersect this segment. As in \cite{M}, for every $n\geq k$ the sequence $S_n$ satisfies a uniform local area estimate and has second fundamental form bounded in $D_k$, so this sequence admits a subsequence that converges, with multiplicity one, to an embedded strongly stable $1$-surface contained in $D_k$ and whose intersection with the geodesic segment $[p,q]$ is non-empty. Proceeding in a diagonal process we will find a smooth limit $S$ which is a complete $1$-surface strongly stable. Therefore, by \cite[Thm.2.13]{MPR} $S$ must be a horosphere.  
\end{proof}

\subsection{Proof of Theorem \ref{half_bc}}

We argue by contradiction and assume that $M$ lies in an open mean convex component $\Omega$ of $\mathbb{H}^3\backslash N$.  Hence, $N' \doteq \partial \Omega$ is given by a union of regular pieces of $N$ with mean curvature one with respect to normal vector field pointing towards the interior of $\Omega$, glued by their boundaries with inner angle less than or equal to $\pi$. If $N'\cap \overline{M} \not= \emptyset$, then by Lemma \ref{horosphere_lim_set} $N'$, thus $N$, must be a horosphere, which contradicts the non-horospherical assumption on $N$. On the other hand, if $N'\cap \overline{M} = \emptyset$, then Lemma \ref{horosphere_lim_set} gives the existence of a horosphere $S$ separating $\overline{M}$ from $N'$. Since $N$ is proper and the mean curvature vector field along $N'$ points towards $S$, we can apply \cite[Thm.7]{M} to conclude that $N$ is also a horosphere. This contradiction concludes the proof.

\section{Proof of Theorems \ref{parabolic_half}, \ref{stochastic_half-1-surfaces} and \ref{maximum-principle-infinity}}

Since the proofs to be presented in this section share a core argument based on the construction of a weak solution to the subequation $\triangle u \geq \lambda u$, for $\lambda\geq 0$, we will first provide a detailed introduction. Generically, let $\varphi \colon M \rightarrow P$ be an immersed surface on a manifold $P$ with Ricci curvature bounded from below and with sectional curvature bounded from above, and let $\psi \colon N \rightarrow P$ be a surface properly immersed in $P$ with bounded curvature. Henceforth, we assume that $M$ lies in a mean convex component of $P\backslash N$.

Following the approach from \cite{bessa-jorge-pessoa} a key point in the proof will be to consider solutions of the subequation $\Delta u \geq \lambda u$ in a weak sense, more precisely, in the barrier sense. We then recall that a continuous function $u \colon M \rightarrow \mathbb{R}$ is said to satisfy $\triangle u \geq 0$ at a point $p \in M$ in the barrier sense if, for any $\delta > 0$ there exists a smooth support function $\phi_\delta$ defined around $p$ such that
\begin{eqnarray*}
\begin{array}{cc}
\left\lbrace
\begin{array}{rl}
\phi_\delta =  u & \text{at} \ \ p, \\[0.2cm]
\phi_\delta  \leq u & \text{near } p,
\end{array}
\right.
& \quad \text{and} \qquad \triangle  \phi_\delta (p) > -\delta .
\end{array}
\end{eqnarray*}

The function $u$ to be constructed will be given in terms of the composition $u = g\circ t_{_{\!N}}\circ\varphi$ where $t_{_{\!N}} \colon N \rightarrow \mathbb{R}$ is the distance function to the surface $N$, and $g \colon \mathbb{R} \rightarrow \mathbb{R}$ is a smooth function to be chosen later satisfying $g'(t) < 0$ and $g''(t) >0$.


Let $\Omega$ be the mean convex connected component of $P\backslash N$ containing $M$, and let $\overrightarrow{H}_{\!_N} = \nu_{_N}$ be the mean curvature vector field along $\partial \Omega \subset N$ pointing towards $\Omega$. The boundary of $\Omega$ is given as a union of smooth pieces of $N$ and whose inner angles are not bigger than $\pi$ along an intersection set $\Gamma$.

From the curvature bounds of $P$ and $N$, we know that $N$ has the second fundamental form uniformly bounded. By the extended Rauch's theorem (see \cite[Cor.4.2]{warner}) there exists a regular tubular neighborhood $V(\varepsilon)$ of $N$, with $\varepsilon>0$ depending only on the curvature bounds, for which there is no focal points along any normal geodesic $\gamma : [0,\varepsilon) \to P$ issuing from a point $\gamma(0) \in N$ (cf. also \cite[Ch.10]{docarmo}). Therefore, along any geodesic minimizing the distance between a fixed point in $V(\varepsilon)$ and the surface $N$, the parallel surfaces along this normal geodesic are well-defined and non-degenerated.

Although the neighborhood $V(\varepsilon)$ may not be embedded, the distance function from $N$ restricted to $V_+(\varepsilon) = V(\varepsilon)\cap \Omega$, namely $t_{\!_N} : V_+(\varepsilon) \rightarrow \mathbb{R}$ is a positive Lipschitz function. Moreover, for a fixed point $y \in V_{+}(\varepsilon)$ it is easy to see that the nearest points to $y$ on $N$ can not lie on the part of $\Gamma$ where the inner angle is less than to $\pi$, for if a minimizing segment connecting $y$ to $\partial \Omega$ would be normal to two different tangent planes. 

Fix a point $y \in V_+(\varepsilon)$ and let $z \in N$ be a nearest point to $y$ and consider a simply connected, locally embedded neighborhood $W_z\subset N$ of $z$  that is  graph over an open ball  $B_z\subset T_{z}N$ with radius uniformly bounded from below, and such that ${\rm dist}_{\!_{P}}(y,z)\leq {\rm dist}_{\!_{P}}(y,\bar z)$ for all $\bar z \in W_z$. For 
each neighborhood $W_z$ the oriented distance function to $W_z$, namely $t_z  \colon C_z(\varepsilon) \to \mathbb{R}$, defined on a regular tubular neighborhood $C_z(\epsilon)=T_{\epsilon}(W_z)$ with radius $\epsilon$,  such that $C_z(\varepsilon)=C_z^{+}(\varepsilon)\cup W_z\cup C_z^{-}(\varepsilon)$ and $t_z(y) > 0$. Now, since $y$ could also be in the cut locus of $W_z$, to construct a smooth support function for $t_{\!_N}$, following notations from \cite{GLM} we consider a supporting surface $S_z$ for $C_z^+(\varepsilon)$ at $z \in W_z$, that is, a smooth surface such that $z \in S_z$ and $C_z^+(\varepsilon)\cap S_z = \emptyset$. Indeed, by \cite[Lem.1]{GLM}, for any $\mu>0$, there exists a supporting surface $S_z^{\mu}$ for $C_z^+(\varepsilon)$ at $z \in W_z$ such that
\begin{eqnarray}
H_{z}^{\mu}(z) > 1-\mu & \text{and} & y \notin \text{cut}(S_z^{\mu}),
\end{eqnarray}
where $H_{z}^{\mu}$ is the mean curvature of $S_z^{\mu}$. We notice that a way to construct these supporting surfaces is by deforming smoothly the boundary of a small ball $B \subset C_z^-(\varepsilon)$ touching $W_z$ at $z$. Furthermore, recalling $N$ has bounded curvature we can take a universal constant $0<c = \sup\{\vert\kappa^1\vert,\vert\kappa^2\vert\}< + \infty$, where $\kappa^1 \leq \kappa^2$ are the ordered principal curvatures of $S_z^{\mu}$, for all $z \in N$ and $\mu>0$ sufficiently small. Therefore, the oriented distance function to $S_z^{\mu}$, here called $t_{z}^{\mu}$, is smooth around $y$ and touches $t_{\!_N}$ from above at $y$.

Assuming that $\varphi(M) \cap V(\varepsilon/8) \not=\emptyset$, we can now define a bounded function $u \colon M \rightarrow \mathbb{R}$ by setting $u = \max\{v,0\}$, where $v \colon \varphi^{-1}(V_+(\varepsilon)) \rightarrow \mathbb{R}$ is given by $v(x) = g\circ t_{\!_N}(\varphi(x))$, with 
\begin{eqnarray}
g(t) = \log \left(\frac{2+\varepsilon\, c}{2+ 4\,c \,t}\right).
\end{eqnarray}


We first observe that $v(x) > 0$ if and only if $ x \in \varphi^{-1}(V_+(\varepsilon/4))$. Thus, $u$ will be a weak solution of $\Delta u \geq \lambda u$ on $M$, for $\lambda\geq 0$, once we have proved that $v$ satisfies this subequation on $\varphi^{-1}(V_+(\varepsilon/2))$ in the barrier sense. Up to reducing $\varepsilon$, if necessary, we will assume
\begin{eqnarray}\label{varepsilon_restrictions}
 0<\varepsilon < \tanh^{-1}\left(\frac{1}{4c}\right) < \frac{1}{2c} \raisepunct{.}
\end{eqnarray} 

For any fixed point $ p \in M$ with $y = \varphi(p) \in V_+(\varepsilon/2)$, we pick a point $z \in W_z \subset N$ and a neighborhood $V_z$ as described above. Given $\delta>0$, let us consider $\phi_\delta = g\circ t_{z}^{\mu}\circ \varphi$ as a support function to $v$ at $p$, where $t_{z}^{\mu}$ is the oriented distance function to $S_z^{\mu}$ with $t_{z}^{\mu}(y)>0$, and $S_z^{\mu}$ is the supporting surface provided by \cite[Lem.1]{GLM}, for some $\mu = \mu(\delta)>0$ to be chosen later. Since $t_z^{\mu}$ is smooth around $y$ and touches $t_{\!_N}$ from above, by the decreasing property of $g$, we can assert that $\phi_\delta$ is a smooth support function that touch $v$ from below at $p$. It is left to prove that
$$\triangle_{_M} \phi_\delta (p) > -\delta.$$  

To compute $\triangle_{_M} \phi_\delta$ we recall that 
\begin{eqnarray}\label{lapla_u}
\triangle_{_M} \phi_\delta &=& \text{Tr}_{TM} \text{Hess}_{P}(g\circ t_z^{\mu}) + 2\langle \nabla_{P}(g\circ t_z^{\mu}),\overrightarrow H_{\!_M}\rangle \nonumber\\[0.2cm]
&\geq & \text{Tr}_{TM}\left(g''(t_z^{\mu}) \nabla t_{z}^{\mu} \otimes \nabla t_{z}^{\mu} + g'(t_z^{\mu}) \nabla^2 t_{z}^{\mu}, \right) + 2g'(t_z^{\mu})\vert \overrightarrow H_{\!_M}\vert,
\end{eqnarray}
where 
\begin{eqnarray*}
g'(t) = - \frac{2c}{1+2c\, t}<0 & \text{and} & g''(t) = \frac{4c^2}{(1+2c\, t)^2}\raisepunct{.}
\end{eqnarray*}
The eigenvalues of $\text{Hess}_{P}(g\circ t_z^{\mu})$ are given by
$$ \mu_1 =  \frac{2c}{1+2c\, t_{z}^{\mu}}\kappa_{1}^{t} , \quad \mu_2 = \frac{2c}{1+2c\, t_{z}^{\mu}}\kappa_{2}^{t}, \qquad \text{and} \quad \mu_3 =  \frac{4c^2}{(1+2c\, t_{z}^{\mu})^2}\raisepunct{,}$$
where $\kappa_1^t, \kappa_2^t$ are the principal curvatures of the parallel surfaces to $S_z^{\mu}$ at $y$, and $\kappa_1 \leq \kappa_2$ are the principal curvatures of $S_z^{\mu}$. 

The main tool to estimate from below \eqref{lapla_u} is to consider the comparison theorem for the Riccati equation satisfied by the principal curvature of the parallel surfaces. For a fixed point $z \in N$ let $\xi$ be a unit-speed geodesic normal to $S_z^{\mu}$ at $z$ with $\xi(0) = z$, and let $\{\xi_1,\xi_2\}$ be an orthonormal basis that diagonalizes the Weingarten map on $T_z S_z^{\mu}$. Let us also denote by $H(t)$ be the signed mean curvature function of the parallel surfaces satisfying $\overrightarrow{H}_{S_t} = H(t) \xi'(t)$.  In the following lemma we summarize Corollaries 3.5 and 3.6 from \cite{gray2012tubes}.

\begin{lemma}\label{gray_lemmata}
Let $\xi_1, \xi_2$ be two vector fields differentiable at time $t$. Then,
\begin{enumerate}
\item[a)] $ \kappa'_i(t) = \kappa_i^2(t) + \text{Sec}_{\!_{P}}(\xi'(t),\xi_i(t)).$ 
\vspace{0.2cm}
\item[b)] $ 2H'(t) = \kappa_1^2(t) + \kappa_2^2(t) + \text{Ric}_{\!_{P}}(\xi'(t)).$
\end{enumerate}
\end{lemma}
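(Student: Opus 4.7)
The plan is to derive the operator Riccati equation for the shape operator $A_t$ of the parallel surface $S_t$ along $\xi$, and then extract (a) by diagonalization and (b) by tracing. First I would consider a variation $\xi_s$ of $\xi$ by unit-speed geodesics issuing orthogonally from $S_z^\mu$ at points of a smooth curve in $S_z^\mu$ through $z$. The resulting variational field $J(t)=\partial_s\xi_s(t)|_{s=0}$ is a Jacobi field along $\xi$ that remains tangent to $S_t$ at every time, satisfies $J''+R(J,\xi')\xi'=0$, and relates to the shape operator by $J'(t)=-A_tJ(t)$ with the orientation convention $\overrightarrow{H}_{S_t}=H(t)\xi'(t)$. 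Differentiating the relation $J'=-A_tJ$ and substituting back into the Jacobi equation yields the operator Riccati equation
\begin{equation*}
A_t' = A_t^2 + \mathcal{R}_t, \qquad \mathcal{R}_t(X):=R(\xi'(t),X)\xi'(t),
\end{equation*}
so that $\langle\mathcal{R}_t(X),X\rangle=\mathrm{Sec}_P(\xi'(t),X)$ for any unit vector $X$ orthogonal to $\xi'(t)$.

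For (a), I would fix $t$, diagonalize $A_t$ with orthonormal principal vectors $\xi_1,\xi_2$ and eigenvalues $\kappa_1\leq\kappa_2$ extended smoothly in a neighborhood of $t$ (possible precisely when the differentiability hypothesis in the statement holds, i.e.\ away from umbilic times where the two principal curvatures collide), and pair the Riccati equation against $\xi_i(t)$. The standard first-variation identity for simple eigenvalues of a self-adjoint operator gives $\kappa_i'(t)=\langle A_t'\xi_i,\xi_i\rangle$, which combined with $\langle A_t^2\xi_i,\xi_i\rangle=\kappa_i^2$ and the identity for $\mathcal{R}_t$ above produces (a). Assertion (b) then follows by summing the two instances of (a): since $\{\xi_1(t),\xi_2(t)\}$ is an orthonormal basis of the plane $\xi'(t)^\perp$ in a three-manifold, the two sectional curvatures $\mathrm{Sec}_P(\xi',\xi_1)$ and $\mathrm{Sec}_P(\xi',\xi_2)$ combine into $\mathrm{Ric}_P(\xi'(t))$ by definition, while the left-hand side becomes $2H'(t)=\kappa_1'(t)+\kappa_2'(t)$.

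The main (and essentially only) obstacle is the careful bookkeeping of sign conventions for $A_t$, the Riemann tensor $R$, the sectional curvature, and the mean curvature $H(t)$: a single misplaced sign would turn the stated Riccati equation into its negative and reverse the claim. Beyond this purely conventional matter, the derivation is entirely classical and coincides with the content of Corollaries~3.5 and~3.6 of Gray's \emph{Tubes} that the authors are invoking.
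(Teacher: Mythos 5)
The paper offers no proof of this lemma: it simply states ``In the following lemma we summarize Corollaries 3.5 and 3.6 from \cite{gray2012tubes}'' and cites Gray's \emph{Tubes}. Your derivation --- setting up the Jacobi field $J$ from the normal exponential variation, obtaining $J'=-A_tJ$ and the operator Riccati equation $A_t'=A_t^2+\mathcal{R}_t$, then extracting a) by the Hellmann--Feynman identity $\kappa_i'=\langle A_t'\xi_i,\xi_i\rangle$ for simple eigenvalues and b) by tracing over the orthonormal frame $\{\xi_1,\xi_2\}$ of $\xi'^\perp$ --- is exactly the classical argument underlying Gray's corollaries, so it supplies precisely the content the paper leaves to the reference. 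The signs in your formula $\langle\mathcal{R}_t(X),X\rangle=\mathrm{Sec}_P(\xi',X)$ are internally consistent if one uses do Carmo's conventions for $R$, the Jacobi equation $J''+R(\gamma',J)\gamma'=0$, and sectional curvature $\mathrm{Sec}(X,Y)=\langle R(X,Y)X,Y\rangle$, which is the reference the paper itself cites for tubular neighbourhoods; your explicit caveat about convention bookkeeping is therefore well placed rather than a gap. Your reading of the hypothesis ``$\xi_1,\xi_2$ differentiable at time $t$'' as exactly what is needed for $\kappa_i'=\langle A_t'\xi_i,\xi_i\rangle$ (away from umbilic collisions of eigenvalues) is also the correct interpretation.
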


With this preliminaries in hand we can now proceed with the proof of each theorem independently.

\subsection{Proof of Theorem \ref{parabolic_half}}
The proof follows by contradiction, so we assume that $N$ is a proper $1$-surface immersed in $\mathbb{H}^3$ with bounded curvature and that $M$ is a parabolic $1$-surface contained in a mean convex component of $\mathbb{H}^3\backslash N$ which is not parallel to $N$. As before $\Omega$ denotes this mean convex component and  $\overrightarrow{H}_{\!_N} = \nu_{_N}$ is the mean curvature vector field along $\partial \Omega \subset N$ pointing towards $\Omega$. 
Moreover, since we are working on the hyperbolic space $\mathbb{H}^3$ up to an isometry we may assume $M \cap V(\varepsilon/8) \not= \emptyset$.

From hypotheses the function $u$ described above must be non-constant, thus the parabolicity of $M$ will get a contradiction if we prove that $\triangle u \geq 0$ on $M$ in a weak sense. Indeed, applying \cite[Thm.5.1]{grigoryan} we have that $u$ must be constant.

We first recall that the principal curvatures $\kappa_i^t$, for $i=1,2$, satisfying the equation $a)$ in Lemma \ref{gray_lemmata} are solutions of the following Riccati equation
\begin{eqnarray*}
\left(\kappa_i^t\right)' = \left(\kappa_i^t\right)^2 - 1,
\end{eqnarray*}
along orthogonal geodesics issuing from a point in $N$. These solutions are explicitly given by
\begin{eqnarray*}
\kappa_i^t = \frac{\kappa_i \cosh t - \sinh t}{\cosh t - \kappa_i \sinh t}\raisepunct{.}
\end{eqnarray*}

The restrictions on the value of $\varepsilon$ imposed in \eqref{varepsilon_restrictions} allow us to have a good monotonicity for the eigenvalues 
\begin{eqnarray}\label{monotonicity_ineq}
\mu_1 \leq \mu_2 < \mu_3.
\end{eqnarray}
Indeed, the former inequality follows from the monotonicity $\kappa_1 \leq \kappa_2$. For the last one, note that since $0<t< \varepsilon/2 < 1/4c$ we easily deduce
\begin{eqnarray}\label{eq_aux_1}
\frac{2c}{1+2ct} > \frac{4c}{3}\raisepunct{.}
\end{eqnarray} 
Using $\tanh t \leq 1/4c$ we also obtain
\begin{eqnarray*}
\tanh t \leq \frac{c}{4c^2 - 3}\raisepunct{,}
\end{eqnarray*}
which turns out to be equivalent to
\begin{eqnarray}\label{eq_aux_2}
\frac{c\,\cosh t - \sinh t}{\cosh t - c\, \sinh t} \leq \frac{4c}{3}\raisepunct{.}
\end{eqnarray}
Putting together \eqref{eq_aux_1} with \eqref{eq_aux_2}, and recalling that $\vert \kappa_i\vert \leq c$, we finally have
\begin{eqnarray*}
k_2^t = \frac{\kappa_2 \cosh t - \sinh t}{\cosh t - \kappa_2 \sinh t} \leq \frac{c\,\cosh t - \sinh t}{\cosh t - c\, \sinh t} < \frac{2c}{1+2ct}\raisepunct{.}
\end{eqnarray*}
Thus, the inequality $\mu_2^t < \mu_3^t$ follows.

Now, applying  \cite[Lem.2.3]{jorge2003barrier} we can estimate \eqref{lapla_u} from below as
\begin{eqnarray}\label{lapla_u_ineq}
\triangle \phi_\delta &\geq & -g'(t_z^{\mu})\left(\kappa_1^t + \kappa_2^t\right) + 2g'(t_z^{\mu}) \nonumber \\[0.2cm]
&=& -2g'(t_z^{\mu})\left(H(t) - 1\right).
\end{eqnarray}

To estimate \eqref{lapla_u_ineq} we first note that due to Newton's inequality it is easy to deduce that 
\begin{eqnarray*}
\left\lbrace 
\begin{array}{l}
H'(t)  \geq  H^2(t) - 1, \\[0.2cm]
H(0)  =  H \geq 1-\mu ,
\end{array}\right.
\end{eqnarray*}
where $H$ is the mean curvature of $S_z^\mu$. By the Riccati's comparison theorem we may have
\begin{eqnarray}\label{H_ineq_final}
H(t) \geq  \frac{H \cosh t - \sinh t}{\cosh t - H \sinh t}\raisepunct{.}
\end{eqnarray}
Substituting \eqref{H_ineq_final} into \eqref{lapla_u_ineq} and recalling $H>0$ and  $0< t < \tanh^{-1}(1/4c)$ we obtain
\begin{eqnarray*}
\triangle \phi_\delta &\geq & -2g'(t_z^{\mu}) \frac{(H-1)(\cosh t + \sinh t)}{\cosh t - H \sinh t} \\[0.2cm]
&\geq & 2g'(t_z^{\mu}) \frac{\cosh t + \sinh t}{\cosh t - H \sinh t}\mu \\[0.2cm]
&\geq & -\frac{4c}{1+2ct} \frac{1 + \tanh t}{1 - 2c \tanh t}\mu \\[0.2cm]
&\geq & -16 c\,\mu .
\end{eqnarray*}

Now, taking $\mu(\delta) = \delta/16c$ we conclude that $\triangle \phi_\delta \geq -\delta$, and the function $v$ satisfies $\triangle v \geq 0$ in the barrier sense on $\varphi^{-1}(V_+(\varepsilon/2))$. Therefore, the function $u = \max\{v,0\}$ must be constant. A contradiction. 

\subsection{Proof of Theorem \ref{stochastic_half-1-surfaces}}

To prove the stochastic theorem we need to obtain a strong inequality for the Laplacian of the function $u$, namely, we should prove that $\triangle u \geq \lambda u$, for some $\lambda>0$, in the barrier sense. Following computations from the proof of Theorem \ref{parabolic_half} it is easy to see that \eqref{monotonicity_ineq} holds. Thus, substituting in \eqref{lapla_u} and using \eqref{H_ineq_final} we have 
\begin{eqnarray}\label{ineq_phi_stochastic}
\triangle \phi_\delta &\geq & -2g'(t_z^{\mu})\left(H(t) - H_{\!_M}\right) \nonumber\\[0.2cm]
&\geq & -2g'(t_z^{\mu})\left(\frac{H \cosh t - \sinh t}{\cosh t - H \sinh t} - H + (H - H_{\!_M})\right) \nonumber \\[0.2cm]
&\geq & -2g'(t_z^{\mu})\left(\frac{(H^2 - 1)\sinh t}{\cosh t - H \sinh t} + (H_{\!_N} - H_{\!_M}) - \mu \right) .
\end{eqnarray}

To prove item $i)$ we use that $H\geq H_{\!_N} \geq 1-\mu$ and $0< t < \tanh^{-1}(1/4c)$ to compute
\begin{eqnarray*}
\triangle \phi_\delta &\geq & -2g'(t_z^{\mu})\left(\frac{(\mu^2 - 2\mu)\sinh t}{\cosh t - H \sinh t} + (H_{\!_N} - H_{\!_M}) - \mu \right) \\[0.2cm]
&\geq & -2g'(t_z^{\mu})\left(\frac{- 2\mu\tanh t}{1 - 2c \tanh t} + (H_{\!_N} - H_{\!_M}) - \mu \right) \\[0.2cm]
&\geq & \frac{4c}{1+2c\,t_z^{\mu}}(H_{\!_N} - H_{\!_M}) -  \frac{20c}{1+2c\,t_z^{\mu}}\mu  \\[0.2cm]
&\geq & (\inf_{N} H_{\!_N} - \sup_{M} H_{\!_M})\phi_\delta - 20c\mu .
\end{eqnarray*}
Taking $\mu(\delta) = \delta/20c > 0$ and $\lambda = (\inf_{N} H_{\!_N} - \sup_{M} H_{\!_M})>0$ we conclude that $\triangle v \geq \lambda v$ holds in the barrier sense. Again, from the Liouville theorem for stochastically complete surfaces \cite[Thm.6.1]{grigoryan} we must conclude that the function $u = \max\{v,0\}$ must be identically zero. This finishes the proof.

Similarly, to prove item $ii)$ we assume by contradiction the existence of a positive constant $\ell$ with $\ell \leq t \leq \varepsilon/8$. Taking $\mu(\delta)$ sufficiently small such that $H^2 - 1 > (\inf_N H_{\!_N}^2 - 1)/2 > 0$, by \eqref{ineq_phi_stochastic} we obtain
\begin{eqnarray*}
\triangle \phi_\delta &\geq & -2g'(t_z^{\mu})\left(\frac{(H^2 - 1)}{\coth t - 1} + (H_{\!_N} - H_{\!_M}) - \mu \right) \\[0.2cm]
&\geq & \frac{4c}{1+2c\,t_z^{\mu}}\left(\frac{\inf_N H_{\!_N}^2 - 1}{2\coth \ell - 2} + \inf_{N} H_{\!_N} - \sup_{M} H_{\!_M} - \mu \right) \\[0.2cm]
&\geq & \frac{\inf_N H_{\!_N}^2 - 1}{2\coth \ell - 2}\phi_\delta - 4c\mu .
\end{eqnarray*}
Thus, taking $\mu(\delta) = \delta/4c$ and $\lambda = (\inf_N H_{\!_N}^2 - 1)/(2\coth \ell - 2)>0$ we will arrive at the same contradiction as above.

\subsection{Proof of Theorem \ref{maximum-principle-infinity}}

As in the proof of Theorems \ref{parabolic_half} and \ref{stochastic_half-1-surfaces}, from a translation argument we can assume that ${\rm dist}(M,N) = 0$. Moreover, the selected function $v = g\circ t_{\!_N}$ used before will be a bounded solution of $\triangle v \geq 0$ on $\varphi^{-1}(U(\varepsilon/2)) \cap {\rm int} M$, where $\varphi$ denotes the isometric immersion of $(M, \partial M)$ into $\mathbb{H}^3_{\raisepunct{.}}$ Therefore, $u = \max\{v,0\}\in C^0(M)\cap W^{1,2}_{loc}({\rm int} M)$ is a weak bounded  subharmonic function on ${\rm int} M$ and since $M$ is parabolic, the Ahlfors maximum principle \cite[Prop.10]{pessoa-pigola-setti} says that 
\begin{eqnarray*}
\sup_M u = \sup_{\partial M} u.
\end{eqnarray*}
The conclusion then follows by noticing that $u(x) \to \sup_M u$ if and only if ${\rm dist} (\varphi(x),N) \to 0$.

%



{}
 
\end{document}